\documentclass[11pt]{amsart}
\usepackage[mathscr]{euscript}
\usepackage{amsthm}
\usepackage{amssymb}

\swapnumbers
\theoremstyle{plain}
\newtheorem{theorem}{Theorem}[section]
\newtheorem{proposition}[theorem]{Proposition}
\newtheorem{lemma}[theorem]{Lemma}
\newtheorem{corollary}[theorem]{Corollary}

\theoremstyle{definition}
\newtheorem{definition}[theorem]{Definition}
\newtheorem{example}[theorem]{Example}

\newtheorem{remark}[theorem]{Remark}

\newcommand{\Z}{\mathbb{Z}}
\newcommand{\N}{\mathbb{N}}
\newcommand{\Q}{\mathbb{Q}}
\newcommand{\R}{\mathbb{R}}
\newcommand{\C}{\mathbb{C}}
\newcommand{\F}{\mathbb{F}}
\newcommand{\sub}{\subseteq}
\newcommand{\gen}[1]{\left\langle #1\right\rangle}

\def\I{\mathbf{i}}
\def\P{\textup{\textbf{P}}}
\newcommand{\set}[2]{\{#1| #2\}}
\newcommand{\eps}{\varepsilon}
\renewcommand{\phi}{\varphi}

\begin{document}
\title{The lowest-degree polynomials with non-negative coefficients}

\author[T.~Kepka]{Tom\'{a}\v{s}~Kepka}
\address{Charles University, Faculty of Mathematics and Physics, Department of Algebra \\
Sokolovsk\'{a} 83, 186 75 Prague 8, Czech Republic}
\email{kepka@karlin.mff.cuni.cz}

\author[M.~Korbel\'a\v r]{\textsc{Miroslav Korbel\'a\v r}}
\address{Department of Mathematics and Statistics, Faculty of Science, Masaryk University, Kotl\' a\v rsk\'{a} 2, 611 37 Brno, Czech Republic}
\email{miroslav.korbelar@gmail.com}

\thanks{The first author was supported by the Grant Agency of the Czech Republic, grant \#201/09/0296. The second author was supported by the project LC 505 of Eduard \v Cech's Center for Algebra and Geometry.}

\keywords{positive polynomials, polynomials with non-negative coefficients, lowest-degree polynomials}
\subjclass[2010]{13P25, 65K05}

\begin{abstract}
A polynomial $p\in\R[x]$ is a divisor of some polynomial $0\neq f\in\R[x]$ with non-negative coefficients if and only if $p$ does not have a positive real root. The lowest possible degree of such $f$ for a given $p$ is known for quadratic polynomials. We provide it for cubic polynomials and improve known bounds of this value for a general polynomial.
\end{abstract}

\maketitle

\section{Introduction}

Polynomials with non-negative coefficients appear naturally in the ana\-ly\-sis of stochastic context-free grammars (with applications to natural language processing \cite{geman, manning}), control theory (testing of stable polynomials), optimization and semialgebraic geometry (application of P\'olya's theorem) and algorithmic game theory (bounding the price of anarchy). They are also of interest in approximation theory \cite{nussbaum,yu},  rewriting systems \cite{lucas,zantema}, number theory \cite{kuba,steinberger} or graph theory \cite{woestijne} and have application in biochemistry \cite{briggs,xia} and electronics \cite{riblet}. We will mention here yet another natural motivation for studying of these polynomials - the ordered domains and their connection with semirings.


A commutative and unitary domain $R$ with a non-identical (partial) order $\preceq$ on $R$ is called \emph{ordered} if
(1) $a\preceq b \Rightarrow a+c\preceq b+c$ and (2) $(a\preceq b \;\&\; 0\preceq c) \Rightarrow ac\preceq bc$
for all $a,b,c\in R$. In real algebraic geometry the ordered domains with additional condition $0\preceq a^{2}$ for all $a\in R$ are studied. This property is a weaker form of the linear ordering, but still there are a lot of rings (e.g. algebraically closed fields), where the last mentioned condition can not be fulfilled, although the definition using only (1) and (2) provides many non-linear orders generally.

In an ordered ring $R$ (necessary of characteristic $0$) is the order $\preceq$ uniquely determined by the set $\P(\preceq)=\{a\in R \;|\; 0\prec a\}$ of positive elements. This set is a \emph{subsemiring of $R$} (i.e. non-empty, closed under addition and multiplication) and does not contain zero. For $a\in R$ there exists an order $\preceq$ on $R$ such that $0\prec a$ if and only if the semiring $P=\set{f(a)}{0\neq f\in x\cdot \N_{0}[x]}$ does not contain $0$. Since $\mathrm{char}(R)=0$, we can consider the minimal polynomial $0\neq\min_{\Q}(a)\in\Z[x]$ of $a$, if it exists (otherwise put $\min_{\Q}(a)=0$). Then we get the following characterization:

\begin{theorem}\label{B1}
 Let $R$ be a commutative domain and $a\in R$. Then there exists at least one order  $\preceq$ on $R$ such that $0\prec a$ if and only if the polynomial $\min_{\Q}(a)$ has a real positive root.
\end{theorem}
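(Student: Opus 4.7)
The plan is to use the equivalence recalled just before the statement: an order $\preceq$ on $R$ with $0 \prec a$ exists if and only if $0 \notin P$, where $P = \{f(a) : 0 \neq f \in x\cdot\N_0[x]\}$. Thus it suffices to show that $0 \notin P$ if and only if $\min_\Q(a)$ has a positive real root. The transcendental case $\min_\Q(a) = 0$ is immediate: transcendence forces $f(a) \neq 0$ for every nonzero $f \in x \cdot \N_0[x]$, so $0 \notin P$ automatically, and under the convention that the zero polynomial vacuously admits every positive real as a root, the equivalence holds in this case.

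For the direction ``$\min_\Q(a)$ has a positive real root $\Rightarrow 0 \notin P$'', I would argue by contradiction. Suppose $\alpha > 0$ is a root of $\min_\Q(a)$ and suppose $f(a) = 0$ for some nonzero $f \in x \cdot \N_0[x]$. Then $\min_\Q(a)$ divides $f$ in $\Q[x]$, hence $f(\alpha) = 0$. But writing $f(x) = x \cdot h(x)$ with $0 \neq h \in \N_0[x]$ yields $f(\alpha) = \alpha \cdot h(\alpha) > 0$, since $\alpha > 0$ and $h$ has at least one strictly positive coefficient. This contradiction establishes $0 \notin P$.

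The converse direction is the heart of the proof; I would prove its contrapositive: if $\min_\Q(a) \neq 0$ has no positive real root, then $0 \in P$. The essential tool here is the classical theorem stated in the paper's abstract: $p \in \R[x]$ has no positive real root if and only if $p$ divides some nonzero polynomial in $\R[x]$ with non-negative coefficients. Applied to $p := \min_\Q(a) \in \Z[x]$, this furnishes some $g \in \R[x]$ with $pg \neq 0$ and all coefficients of $pg$ non-negative. One then has to upgrade $g$ to integer coefficients: the condition that $pg$ has non-negative coefficients cuts out a polyhedral cone in the coefficient space of $g$ whose defining inequalities have integer entries (coming from $p \in \Z[x]$), and any rational polyhedral cone containing a nonzero real vector also contains a nonzero rational vector, which after scaling gives some $g' \in \Z[x]$ with $0 \neq pg' \in \N_0[x]$. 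Then $f := x \cdot pg' \in x\cdot\N_0[x]$ is nonzero and satisfies $f(a) = a \cdot p(a) \cdot g'(a) = 0$, witnessing $0 \in P$ as desired.

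The main obstacle in this strategy is the invocation of the classical characterization of real polynomials admitting a multiple with non-negative coefficients --- the very theme of this paper; once that is available, Theorem \ref{B1} reduces to the elementary positivity observation of the second paragraph together with a standard rationality argument for polyhedral cones defined by rational inequalities.
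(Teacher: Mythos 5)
Your proposal is correct and takes essentially the same route as the paper, which obtains Theorem \ref{B1} as an immediate consequence of Theorem \ref{B2} via the stated criterion that an order with $0\prec a$ exists iff $0\notin P$, plus the easy evaluation argument at a positive root. The only difference is cosmetic: the paper's Theorem \ref{B2} already supplies a multiplier $h\in\Q[x]$, so your polyhedral-cone rationality step (while valid) is unnecessary; clearing denominators immediately gives the required element of $x\cdot\N_0[x]$.
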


This theorem is an immediate consequence of the following assertion:

\begin{theorem}\cite{poincare,meissner,polya}\label{B2}
Let $p\in\R[x]$ be a polynomial with no positive real roots. Then there is $0\neq h\in\Q[x]$ such that the polynomial $f=hp$ has all its coefficients non-negative.
\end{theorem}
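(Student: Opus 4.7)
The plan is to factor $p$ over $\R$ and handle each irreducible factor separately. Replacing $p$ by $-p$ if necessary I may assume $p(x)>0$ on $[0,\infty)$, and after factoring out the largest power of $x$ dividing $p$ I may further assume $p(0)\neq 0$. Then
\[
p(x)=c\prod_i(x+a_i)\prod_j q_j(x),
\]
with $c>0$, $a_i\geq 0$, and each $q_j(x)=x^2-2r_j\cos\theta_j\,x+r_j^2$ an irreducible real quadratic with $r_j>0$ and $\theta_j\in(0,\pi)$. The linear factors already have non-negative coefficients, so the task reduces to finding, for each $q_j$, a real multiplier $h_j$ such that $h_jq_j$ has non-negative coefficients.

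The key identity is
\[
\prod_{k=0}^{n-1}\bigl(x^2-2r\cos(\theta+2\pi k/n)\,x+r^2\bigr)=x^{2n}-2r^n\cos(n\theta)\,x^n+r^{2n},
\]
which is verified by comparing the $2n$ roots $re^{\pm i(\theta+2\pi k/n)}$ of either side. Given $\theta_j\in(0,\pi)$, I would take the smallest $n\geq 1$ with $n\theta_j\geq\pi/2$; since $(n-1)\theta_j<\pi/2$ and $\theta_j<\pi$, also $n\theta_j<3\pi/2$, so $\cos(n\theta_j)\leq 0$. Then
\[
h_j(x):=\prod_{k=1}^{n-1}\bigl(x^2-2r_j\cos(\theta_j+2\pi k/n)\,x+r_j^2\bigr)\in\R[x]
\]
satisfies $h_jq_j=x^{2n}-2r_j^n\cos(n\theta_j)\,x^n+r_j^{2n}$, a polynomial with non-negative coefficients. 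Taking $H:=\prod_j h_j\in\R[x]$, the polynomial $Hp$ has non-negative coefficients as a product of such polynomials, and moreover its constant and leading coefficients are both strictly positive.

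The remaining (and main) obstacle is to replace $H$ by a \emph{rational} polynomial. For this I would first pass to $\tilde H:=H\cdot(1+x+\cdots+x^N)$ for $N\geq\deg(Hp)$: a direct look at the convolution shows that every coefficient of $\tilde H p$ is a consecutive partial sum of coefficients of $Hp$ that involves either the positive constant term, the positive leading term, or both, so every coefficient of $\tilde H p$ is strictly positive. Since the set of $h\in\R[x]$ of degree at most $\deg\tilde H$ for which $hp$ has all coefficients strictly positive is a non-empty open subset of $\R^{\deg\tilde H+1}$, density of $\Q$ in $\R$ produces a rational $h\in\Q[x]$ with the same property, finishing the proof.
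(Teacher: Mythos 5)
Your proof is correct. Note, however, that the paper does not prove this statement at all: it is quoted as a classical theorem of Poincar\'e, Meissner and P\'olya, with the only methodological remark being that, by P\'olya's theorem, one may take $h=\pm(x+1)^{k}$ for $k$ sufficiently large. Your argument is therefore a genuinely different (and self-contained) route, close in spirit to the original Poincar\'e--Meissner idea and to the cone-of-complex-numbers arguments used later in the paper: you pair each irreducible quadratic factor $x^{2}-2r\cos\theta\,x+r^{2}$ with its rotated copies so that the product collapses to $x^{2n}-2r^{n}\cos(n\theta)x^{n}+r^{2n}$ with $\cos(n\theta)\le 0$, and then products of the resulting non-negative factors settle the real case; your choice of $n$ (smallest with $n\theta\ge\pi/2$) even gives a degree of order $\pi/\theta$, comparable to the sharp value $\lceil\pi/\varphi\rceil$ in Theorem \ref{B4}. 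The one point your route must address that P\'olya's multiplier gets for free is rationality of $h$, since your $h_{j}$ generally have irrational coefficients; your fix --- first multiplying by $1+x+\cdots+x^{N}$ with $N\ge\deg(Hp)$ to make all coefficients strictly positive (correct, because every convolution window then contains the positive constant or leading coefficient), and then using openness of the strict-positivity condition in the coefficient space of multipliers of bounded degree together with density of $\Q$ --- is sound. Only two cosmetic remarks: you should factor out the power of $x$ and normalize the sign \emph{before} asserting $p>0$ on $[0,\infty)$ (as stated, a root at $0$ would contradict the positivity you assume), and with $p(0)\neq 0$ your linear factors have $a_{i}>0$, which is what actually guarantees the strictly positive constant term of $Hp$ that the perturbation step uses.
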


Probably the first known proof of \ref{B2} appeared implicitly in papers of Poincar\'e \cite{poincare} and Meissner \cite{meissner}. Moreover, by P\'olya's theorem \cite{polya} one can choose $h=\pm(x+1)^{k}$ with $k\in\N$ sufficiently large in \ref{B2}.

Apart of this, for that $a\in R$ which does not allow the appropriate ordering, the structure of the $\Q^{+}[x]$-semimodule $M=\set{f\in\Q^{+}[x]}{f(a)=0}\sub\Q^{+}[x]$ can be studied (here $\F^{+}$ is the semiring of usual non-negative numbers of a field $\F\sub\R$ and $\F^{+}[x]$ is the semiring of polynomials over $x$ with coefficients from $\F^{+}$). In fact, this is a particular version of a general problem:

\emph{Having a field $\F\sub\R$ and an ideal $I\sub\F[x]$, determine the $\F^{+}[x]$-semimodule $I^{+}:=I\cap \F^{+}[x]$.}

In this paper we recall and combine known results in connection with this problem, especially concerning the estimations of the minimal degree of all non-zero elements in $I^{+}$, and make an improvement of them. It is worth to mention that despite the wide range of applications, the polynomials with non-negative coefficients are not studied systematically and  many papers concerning them were done by authors working in different fields of mathematics and using various terminologies. This probably is the reason why the results do not follow up each other, miss citations and several of them were "rediscovered" repeatedly (e.g. Theorem \ref{B2}).


\section{Polynomials with non-negative coefficients}\label{sec2}

Through this paper, let for  a semiring $S$ and  an $S$-semimodule $M$ be $\gen{X}_{S}$ the subsemimodule of $M$ generated by a set $X\sub M$.

Let $\F$ be a subfield of $\R$, $p\in\F[x]$ and $I=p\F[x]$.  First recall, that for a given $n\in\N_{0}$ the set $M_{n}=\set{f\in I^{+}}{\deg(f)\leq n+ \deg(p)}$ is an intersection of the vector space $\gen{p,xp,\dots,x^{n}p}_{\F}$ over $\F$ with the convex $\F^{+}$-cone $\set{f\in \F^{+}[x]}{\deg(f)\leq n+ \deg(p)}$. Using basic knowledge of convex geometry one without difficulty gets  that $M_{n}$ is thus generated by a finite set of "extremal rays" or more precisely, there is the smallest (up to $\F^{+}$-multiples of elements) finite generating set $X_{n}\sub\F^{+}[x]$ of the $\F^{+}$-semimodule $M_{n}$. These generators can be computed via linear equations using only rational numbers and coefficients of $p$ and therefore they do not depend on the choice of the extension field. Similarly, there is the least generating set $X$ (up to $\F^{+}$-multiples of elements) of the $\F^{+}[x]$-semimodule $I^{+}$ ($X$ may be obtained from $\cup_{n\in\N_{0}}X_{n}$ by letting out those polynomials that are $x^{k}$-multiples (for $k\in\N$) of other polynomials in $\cup_{n\in\N_{0}}X_{n}$).  It is also not difficult to show that $I^{+}$ is finitely generated as an $\F^{+}[x]$-semimodule if and only $I$ is a monomial ideal.

For a chosen polynomial it is therefore possible (more or less efficiently)  to compute generators of $I^{+}$ up to any given degree. But to determine  explicitly $I^{+}$ in general seems to be a difficult task, since even the estimations of the least possible degree of a non-zero polynomial in $I^{+}$ are known only very roughly.

\begin{definition}
For $0\neq p\in\R[x]$ denote $\deg_{+}(p)$ the least possible degree of a non-zero polynomial in $\R^{+}[x]$ divisible by $p$ (if such exists), otherwise put $\deg_{+}(p)=\infty$. Further put $D(p)=\deg_{+}(p)-\deg(p)$ and $\mathcal{D}^{+}(\R)=\{0\}\cup\set{f\in\R[x]}{(\forall a\in(0,\infty))\ f(a)>0}$.
\end{definition}

Polynomials of minimal degree in $(p\R[x])^{+}$ for $p$ being quadratic  were  determined by Dancs \cite{dancs}. His proof used Minkowski-Farkas theorem for systems of linear inequalities. This result was later proved by Motzkin and Strauss \cite{strauss} by a simpler argument using cones of complex numbers. Finally, Harnos \cite{harnos} described completely the generating set of the $\R^{+}[x]$-semimodule $(p\R[x])^{+}$ for $p\in\R[x]\setminus\R^{+}[x]$ being quadratic.

\begin{theorem}\cite{dancs,harnos}\label{B4}
 Let $p=x^2+ax+b=(x-\omega)(x-\overline{\omega})\in\R[x]$, where $\omega=r(\cos \varphi+\I\sin\varphi)$, $r>0$ and $\varphi\in(0,\pi)$. Then
$$\deg_{+}(p)=\Big\lceil\frac{\pi}{\varphi}\Big\rceil=\Big\lceil\frac{\pi}{\arcsin(\sqrt{1-\frac{a^{2}}{4b}})}\Big\rceil.$$


Moreover, there is $f\in(p\R[x])^{+}$ with all coefficients positive such that $\deg(f)=\deg_{+}(p)+1$.
\end{theorem}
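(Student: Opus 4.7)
The plan is to translate divisibility by $p$ into a statement about complex arguments and then exploit planar convexity. Since $p$ has non-real roots, it is irreducible in $\R[x]$, so $p\mid f$ if and only if $f(\omega)=0$. Writing $f=\sum_{k=0}^n c_k x^k$ with $c_k\geq 0$ not all zero, the condition $f(\omega)=0$ becomes $\sum_k c_k r^k e^{\I k\varphi}=0$; i.e.\ the vectors $c_k\omega^k\in\C\cong\R^2$ sum to the origin. The basic convex-geometric fact used throughout is that a nontrivial non-negative combination of plane vectors vanishes if and only if the set of involved directions does not lie in any open half-plane.

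\textbf{Lower bound.} When $\deg f=n$, the direction arguments occurring in $f(\omega)$ belong to $\{k\varphi:k=0,\ldots,n\}\sub[0,n\varphi]$. If $n\varphi<\pi$, these all fit strictly inside an open half-plane and no nontrivial non-negative combination can vanish. Hence $n\geq\lceil\pi/\varphi\rceil$.

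\textbf{Upper bound.} Set $N:=\lceil\pi/\varphi\rceil$ and $\theta:=N\varphi-\pi\in[0,\varphi)$. Look for $f=c_0+c_1 x+x^N$ with $f(\omega)=0$; this is the $\R$-linear equation $c_0+c_1\omega=-\omega^N=r^N e^{\I\theta}$. Splitting into real and imaginary parts and using $\theta\in[0,\varphi)\sub[0,\pi)$ yields
\[
c_1=\frac{r^{N-1}\sin\theta}{\sin\varphi}\geq 0,\qquad c_0=\frac{r^N\sin(\varphi-\theta)}{\sin\varphi}>0,
\]
so $f\in\R^+[x]$, and combined with the lower bound this gives $\deg_+(p)=N$.

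\textbf{The moreover assertion and the main obstacle.} For each $k\in\{1,\ldots,N-1\}$ the argument $k\varphi\in(0,\pi)$, and the same two-dimensional computation provides $g_k=a_k+b_k x^k+x^N\in(p\R[x])^+$ with $a_k>0$ and $b_k\geq 0$, the latter being strictly positive exactly when $\theta>0$. In the generic case $\pi/\varphi\notin\N$ the sum $h:=\sum_{k=1}^{N-1}g_k$ has every coefficient in $x^0,\ldots,x^N$ strictly positive, and $f^*:=(x+1)h$ is a polynomial of degree $N+1$ with all coefficients strictly positive, divisible by $p$. The genuine difficulty is the borderline integer case $\varphi=\pi/N$: the unique minimal-degree element of $(p\R[x])^+$ is $r^N+x^N$, each $g_k$ collapses to $r^N+x^N$, and their positive combinations or $x$-multiples never populate the intermediate coefficients. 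A degree-$(N+1)$ witness with all coefficients strictly positive must then be exhibited by a direct analysis of $(x^2+ax+b)\cdot h$, choosing $h\in\R[x]$ of degree $N-1$ so as to balance the negative coefficient $a=-2r\cos(\pi/N)$ in every position; this last step is the technical core of the construction.
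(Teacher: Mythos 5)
Your proof of the main equality $\deg_{+}(p)=\lceil\pi/\varphi\rceil$ is correct and complete: the open-half-plane observation gives the lower bound, and the explicit witness $c_{0}+c_{1}x+x^{N}$ with $c_{1}=r^{N-1}\sin\theta/\sin\varphi\geq 0$ and $c_{0}=r^{N}\sin(\varphi-\theta)/\sin\varphi>0$ gives the upper bound (irreducibility of $p$ over $\R$ converting $f(\omega)=0$ into $p\mid f$). The paper itself gives no proof of this theorem (it is quoted from \cite{dancs,harnos}), and your route is essentially the Motzkin--Straus ``cone of complex numbers'' argument alluded to before the statement, so on the main claim there is nothing to object to. Two small omissions: you never address the second displayed expression $\lceil\pi/\arcsin(\sqrt{1-a^{2}/(4b)})\rceil$, which is just the identity $\sin\varphi=\sqrt{1-a^{2}/(4b)}$ (and tacitly assumes $\varphi\leq\pi/2$, i.e.\ $a\leq 0$; for $a>0$ the polynomial is already in $\R^{+}[x]$); one line would suffice.

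The genuine gap is in the ``moreover'' clause. You prove it only in the generic case $\pi/\varphi\notin\N$; in the borderline case $\varphi=\pi/N$ you correctly diagnose that all your $g_{k}$ collapse to $r^{N}+x^{N}$, and then you merely describe what a construction would have to do (``balance the negative coefficient $a$ in every position \dots\ this last step is the technical core''), without doing it. As written, the last sentence of the theorem is therefore unproved exactly in the case where it is not automatic. The fix is cheap and uses the very convexity principle you state at the outset: for $k=0,\dots,N+1$ the directions $k\pi/N$ of the vectors $\omega^{k}$ cover the circle with largest angular gap $\pi-\pi/N<\pi$, so they lie in no closed half-plane; hence the convex cone they positively span is all of $\R^{2}$, and writing $-\sum_{k=0}^{N+1}\omega^{k}=\sum_{k=0}^{N+1}d_{k}\omega^{k}$ with $d_{k}\geq 0$ yields $\sum_{k=0}^{N+1}(1+d_{k})\omega^{k}=0$, i.e.\ a multiple of $p$ of degree $N+1$ all of whose coefficients are at least $1$. (Equivalently, a short explicit choice of a quadratic or higher cofactor works, but some such argument must replace the final sentence of your proposal.)
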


We derive the value of $\deg_{+}$ for cubic polynomials. Let us recall that every  polynomial $p$ of degree $d\geq 1$ has a well known correspondence to a linear homogeneous recurrence relation with constant coefficients. All sequences that are solution of this relation form a vector space of dimension $d$ with a particular basis $(a^{(1)}_{i})_{i=0}^{\infty},\dots,(a^{(d)}_{i})_{i=0}^{\infty}$ made up with help of roots of $p$ and derivations.
 Now put $(\mathbf{v}_{k}(p)=)\mathbf{v}_{k}=(a^{(1)}_{k},\dots,a^{(d)}_{k})\in\R^{d}$ for $k\in\N_{0}$.  The vectors $\mathbf{v}_{0},\dots,\mathbf{v}_{d-1}$ are linearly independent and thus $\phi(\cdot)=\det(\mathbf{v}_{0},\dots,\mathbf{v}_{d-2},\,\cdot\, )$ is a non-zero functional. Put $\mathrm{n}^{\uparrow}(p):=\min\set{k\in\{d,d+1,\dots\}}{\phi(\mathbf{v}_{k-1})\phi(\mathbf{v}_{k})\leq 0}\in\N\cup\{\infty\}$ and $\mathrm{n}^{\downarrow}(p):=\mathrm{n}^{\uparrow}(\tilde{p})$, where $\tilde{p}(x)=x^{d}p(\frac{1}{x})$. Since there is $0\neq f\in(p\R[x])^{+}$ of degree $m$ if and only if $\mathbf{v}_{m}\in\gen{\mathbf{v}_{0},\dots,\mathbf{v}_{m-1}}_{\R^{+}}$ and since $\phi$ corresponds to a particular halfspace, it follows that $\mathrm{n}^{\uparrow}(p)\leq \deg_{+}(p)$. Further, as one easily checks, the number $\mathrm{n}^{\uparrow}(p)$ will be the same if we choose any other basis than $(a^{(1)}_{i})_{i=0}^{\infty},\dots,(a^{(d)}_{i})_{i=0}^{\infty}$ (e.g. such one where the initial values correspond to the identity matrix). We summarize now our conclusions into \ref{xp1} and \ref{xp2}.




\begin{proposition}\label{xp1}
Let $p\in\mathcal{D}^{+}(\R)$ be of degree $\geq 2$. Then $\max\{\mathrm{n}^{\uparrow}(p),\mathrm{n}^{\downarrow}(p)\}\leq \deg_{+}(p)$. Moreover, if $p\in\R^{+}[x]$ then
 $\mathrm{n}^{\uparrow}(p)=\mathrm{n}^{\downarrow}(p)=\deg_{+}(p)$.
\end{proposition}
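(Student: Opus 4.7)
The plan is to leverage the geometric characterization recalled just before the statement: a non-zero $f=\sum_{k=0}^m f_k x^k\in(p\R[x])^+$ of degree $m$ corresponds to a non-trivial linear relation $\sum_{k=0}^m f_k\mathbf{v}_k=0$ with $f_k\ge 0$ and $f_m>0$. Both inequalities in the first claim will fall out by testing such a relation against the linear functional $\phi$ (respectively, its analogue for $\tilde p$).

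First I would prove $\mathrm{n}^{\uparrow}(p)\le\deg_{+}(p)$. Suppose $\deg_{+}(p)=m<\infty$ with corresponding witness $f$; applying $\phi$ to the dependence relation and using that $\phi$ annihilates $\mathbf{v}_0,\dots,\mathbf{v}_{d-2}$ reduces it to $\sum_{k=d-1}^m f_k\phi(\mathbf{v}_k)=0$. After normalizing $\phi$ so that $\phi(\mathbf{v}_{d-1})>0$, suppose for contradiction that $\mathrm{n}^{\uparrow}(p)>m$; then the defining minimum forces $\phi(\mathbf{v}_{k-1})\phi(\mathbf{v}_k)>0$ for every $d\le k\le m$, which a short induction upgrades to $\phi(\mathbf{v}_k)>0$ throughout $d-1\le k\le m$. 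Since all $f_k\ge 0$ and the $k=m$ summand $f_m\phi(\mathbf{v}_m)$ is strictly positive, the collapsed sum cannot vanish --- contradiction --- so $\mathrm{n}^{\uparrow}(p)\le m$.

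For $\mathrm{n}^{\downarrow}(p)\le\deg_{+}(p)$, I would apply the above to the reversed polynomial $\tilde p(x)=x^d p(1/x)$. The reversal $f\mapsto x^{\deg f}f(1/x)$ preserves non-negativity of coefficients and maps $p\R[x]$ into $\tilde p\R[x]$ without increasing the degree, yielding $\deg_{+}(\tilde p)\le\deg_{+}(p)$. Combining this with the previous step applied to $\tilde p$ gives $\mathrm{n}^{\downarrow}(p)=\mathrm{n}^{\uparrow}(\tilde p)\le\deg_{+}(\tilde p)\le\deg_{+}(p)$. A minor technicality is the edge case $p(0)=0$, in which $\deg\tilde p<d$; here one reduces by factoring $p=x^kq$ with $q(0)\neq 0$ and arguing with $q$, and this overlaps with the moreover case below.

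For the moreover part, assume $p=\sum_{i=0}^d p_i x^i\in\R^+[x]$; then $f=p$ itself witnesses $\deg_{+}(p)=d$, and combining with the first-part inequality $\mathrm{n}^{\uparrow}(p)\le d$ and the trivial lower bound $\mathrm{n}^{\uparrow}(p)\ge d$, equality $\mathrm{n}^{\uparrow}(p)=d$ will follow once $\phi(\mathbf{v}_{d-1})\phi(\mathbf{v}_d)\le 0$ is verified at $k=d$. The recurrence directly gives $\mathbf{v}_d=-p_d^{-1}\sum_{i=0}^{d-1}p_i\mathbf{v}_i$, whence $\phi(\mathbf{v}_d)=-p_d^{-1}p_{d-1}\phi(\mathbf{v}_{d-1})$; since $p_d>0$ and $p_{d-1}\ge 0$, the product has the required non-positive sign. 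The same computation applied to $\tilde p\in\R^+[x]$ delivers $\mathrm{n}^{\downarrow}(p)=d$. The only delicate aspect throughout is the bookkeeping of signs and the consistent handling of $\phi$ after its normalization --- no deeper obstacle is expected.
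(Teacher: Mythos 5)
Your argument is correct and is essentially the same as the paper's (the paper proves \ref{xp1} only by the sketch preceding it: divisibility by $p$ means a non-negative dependence $\sum_k f_k\mathbf{v}_k=0$, the functional $\phi$ cuts out a halfspace forcing $\mathrm{n}^{\uparrow}(p)\le\deg_+(p)$, the reversal $\tilde p$ handles $\mathrm{n}^{\downarrow}$, and the $p\in\R^{+}[x]$ case is the easy recurrence computation you carry out). Your proposal simply fills in these details, including the same implicit assumption $p(0)\neq 0$ for the $\mathrm{n}^{\downarrow}$ part that the paper also glosses over.
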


\begin{proposition}\label{xp2}
Let $p=a_{d}x^{d}+a_{d-1}x^{d-1}+\cdots+a_{0}\in\mathcal{D}^{+}(\R)$ be of degree $d\geq 2$ and $(r_{n})_{n\in\N_{0}}\sub\R$ be a sequence given by a recurrence relation $a_{d}r_{n+d}+a_{d-1}r_{n+d-1}+\cdots+a_{0}r_{n}=0$ for $n\in\N_{0}$ and $r_{0}=\cdots=r_{d-2}=0$, $r_{d-1}=1$.

Then $\mathrm{n}^{\uparrow}(p)=\min\set{n\in\{d,d+1,\dots\}}{r_{n}\leq 0}$
\end{proposition}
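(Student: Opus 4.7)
The plan is to exploit the observation (made just before \ref{xp1}) that $\mathrm{n}^\uparrow(p)$ is independent of the choice of basis of the solution space of the recurrence. Indeed, if we replace the basis by another one, the vectors $\mathbf{v}_k$ are transformed by a fixed invertible matrix $A\in\R^{d\times d}$ (the same $A$ for every $k$), so $\phi(\mathbf{v}_k)$ is simply multiplied by $\det(A)\neq 0$ for all $k$. Consequently the sign of each product $\phi(\mathbf{v}_{k-1})\phi(\mathbf{v}_k)$ is preserved, and the set $\{k\geq d\mid\phi(\mathbf{v}_{k-1})\phi(\mathbf{v}_k)\leq 0\}$ is intrinsic.

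Using this freedom, I would switch from the root-based basis to the basis $(b^{(j)}_n)_{n\geq 0}$, $j=1,\dots,d$, of solutions of the recurrence determined by the identity-matrix initial conditions $b^{(j)}_i=\delta_{i,j-1}$ for $0\leq i\leq d-1$. With this choice the vectors $\mathbf{v}_i$, $0\leq i\leq d-1$, are exactly the standard basis vectors $e_1,\dots,e_d$ of $\R^d$, so the $d\times d$ matrix with columns $\mathbf{v}_0,\dots,\mathbf{v}_{d-2},\mathbf{v}_k$ is upper triangular with diagonal $(1,\dots,1,b^{(d)}_k)$, and hence
$$\phi(\mathbf{v}_k)=\det(\mathbf{v}_0,\dots,\mathbf{v}_{d-2},\mathbf{v}_k)=b^{(d)}_k.$$
The sequence $(b^{(d)}_n)$ obeys the given recurrence with initial values $b^{(d)}_0=\dots=b^{(d)}_{d-2}=0$ and $b^{(d)}_{d-1}=1$, which are precisely the conditions defining $(r_n)$ in the statement. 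Hence $\phi(\mathbf{v}_k)=r_k$ for every $k\geq 0$, and
$$\mathrm{n}^\uparrow(p)=\min\{k\geq d\mid r_{k-1}r_k\leq 0\}.$$

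It remains to match this with the form $\min\{n\geq d\mid r_n\leq 0\}$ stated in the proposition. Since $r_{d-1}=1>0$, by minimality every $r_j$ with $d-1\leq j<\mathrm{n}^\uparrow(p)$ is strictly positive, so at $k=\mathrm{n}^\uparrow(p)$ the product condition $r_{k-1}r_k\leq 0$ reduces to $r_k\leq 0$; conversely, at the first $n\geq d$ with $r_n\leq 0$ the preceding term $r_{n-1}$ is positive, so $r_{n-1}r_n\leq 0$. The two minima coincide.

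The only step that requires a moment's care is the basis-independence statement; once that is in place the rest of the proof is a triangular-determinant evaluation together with a one-line positivity induction on $(r_n)$.
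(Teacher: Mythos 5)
Your proposal is correct and follows essentially the same route the paper intends: it invokes the basis-independence of $\mathrm{n}^{\uparrow}(p)$ (which you rightly justify via the fixed change-of-basis matrix multiplying $\phi$ by a nonzero determinant), passes to the basis with identity-matrix initial values so that $\phi(\mathbf{v}_k)=r_k$, and then reduces the sign-change condition $r_{k-1}r_k\leq 0$ to $r_n\leq 0$ by the positivity induction starting from $r_{d-1}=1$. This matches the argument sketched in the paper just before Propositions \ref{xp1} and \ref{xp2}, with your write-up merely making the details explicit.
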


We show now that $\mathrm{n}^{\uparrow}(p)$ and $\mathrm{n}^{\downarrow}(p)$ determine the value of $\deg_{+}(p)$ for $p\in\mathcal{D}^{+}(\R)$ of degree $2$ or $3$.

\begin{lemma}\label{xp3}
Let $0<\varphi<\pi/2$, $0<s\leq 1$ and $n=\Big\lceil\frac{\pi}{\varphi}\Big\rceil$. Put $\mathbf{v}_{k}=(\cos k\phi,\sin k\phi, (-s)^{k})\in\R^{3}$ for $k\in\N_{0}$. Then $$(\mathbf{v}_{k}\times \mathbf{v}_{0})\cdot \mathbf{v}_{l}=\sin(k-l)\phi+(-s)^{k}\sin l\phi - (-s)^{l}\sin k\phi.$$

Let $H(\mathbf{u},\mathbf{v})=\set{\mathbf{w}\in\R^{3}}{(\mathbf{u}\times \mathbf{v})\cdot \mathbf{w}\geq 0}$ denote the halfspace determined by the (linearly independent) vectors $\mathbf{u},\mathbf{v}\in\R^{3}$. Similarly, put $H^{\circ}(\mathbf{u},\mathbf{v})=\set{\mathbf{w}\in\R^{3}}{(\mathbf{u}\times \mathbf{v})\cdot \mathbf{w}> 0}$. Let $\mathbf{e}_{3}=(0,0,1)\in\R^{3}$.
\begin{enumerate}
\item[(i)] $\mathbf{v}_{2},\dots,\mathbf{v}_{n-1}\in H^{\circ}(\mathbf{v}_{0},\mathbf{v}_{1})$.
\item[(ii)] If  $n$ is odd, then $H(\mathbf{v}_{0},\mathbf{v}_{1})\cap H(\mathbf{v}_{n-1},\mathbf{v}_{0})\cap H(\mathbf{v}_{1},\mathbf{e}_{3})\sub \gen{\mathbf{v}_{0},\dots,\mathbf{v}_{n-1}}_{\R^{+}}$.
\item[(iii)] If  $n$ is even, then $H(\mathbf{v}_{0},\mathbf{v}_{1})\cap H(\mathbf{v}_{n-2},\mathbf{v}_{0})\cap H(\mathbf{v}_{1},\mathbf{e}_{3})\sub \gen{\mathbf{v}_{0},\dots,\mathbf{v}_{n-1}}_{\R^{+}}$.
\end{enumerate}

\end{lemma}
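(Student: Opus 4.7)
The plan is to verify the identity by a direct determinant computation, handle (i) by a parity case analysis, and attack (ii), (iii) by identifying the three extremal rays of the intersection of halfspaces.

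For the identity, write $(\mathbf{v}_k\times\mathbf{v}_0)\cdot\mathbf{v}_l=\det(\mathbf{v}_k,\mathbf{v}_0,\mathbf{v}_l)$, expand the determinant along the middle row (which equals $(1,0,1)$), and simplify the resulting $2\times 2$ minors using $\sin k\varphi\cos l\varphi-\cos k\varphi\sin l\varphi=\sin(k-l)\varphi$.

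For (i), specialising the identity to the triple $(\mathbf{v}_1,\mathbf{v}_0,\mathbf{v}_k)$ and negating gives
\[
(\mathbf{v}_0\times\mathbf{v}_1)\cdot\mathbf{v}_k=\sin(k-1)\varphi+s\sin k\varphi+(-s)^k\sin\varphi.
\]
For $2\leq k\leq n-1$, the definition $n=\lceil\pi/\varphi\rceil$ keeps both $(k-1)\varphi$ and $k\varphi$ strictly inside $(0,\pi)$, so the first two summands are strictly positive. When $k$ is even the last summand is also positive and we are done. When $k$ is odd (so $k\geq 3$), the last summand equals $-s^k\sin\varphi$, and I would then use the product-to-sum identity
\[
\sin(k-1)\varphi-\sin\varphi=2\sin\tfrac{(k-2)\varphi}{2}\cos\tfrac{k\varphi}{2}
\]
together with the observation that both factors on the right are strictly positive (since $(k-2)\varphi/2$ and $k\varphi/2$ lie in $(0,\pi/2)$) to obtain $\sin(k-1)\varphi>\sin\varphi\geq s^k\sin\varphi$, which finishes the odd case.

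For (ii) and (iii), denote the intersection of the three halfspaces by $C$, and let $m=n-1$ in case (ii) and $m=n-2$ in case (iii); in both cases $m$ is even, which is the decisive feature. I would first check that $C$ is a pointed three-dimensional cone; non-degeneracy comes from (i), which places $\mathbf{v}_m$ strictly off the plane $\mathrm{span}(\mathbf{v}_0,\mathbf{v}_1)$. The three extremal rays of $C$ are then the pairwise intersections of the bounding planes. Two are spanned by $\mathbf{v}_0$ and $\mathbf{v}_1$, trivially inside $\gen{\mathbf{v}_0,\dots,\mathbf{v}_{n-1}}_{\R^+}$. The third, call it $\mathbf{u}$, lies in $\mathrm{span}(\mathbf{v}_m,\mathbf{v}_0)\cap\mathrm{span}(\mathbf{v}_1,\mathbf{e}_3)$, so we can write $\mathbf{u}=a\mathbf{v}_m+b\mathbf{v}_0=c\mathbf{v}_1+d\mathbf{e}_3$; equating the $x$- and $y$-coordinates yields $b=a\sin(m-1)\varphi/\sin\varphi$. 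Because $(m-1)\varphi\in(0,\pi)$, this ratio is strictly positive, and requiring $\mathbf{u}\in H(\mathbf{v}_0,\mathbf{v}_1)$ together with (i) (which gives $(\mathbf{v}_0\times\mathbf{v}_1)\cdot\mathbf{v}_m>0$) forces $a>0$, hence $b>0$. Thus $\mathbf{u}$ is a nonnegative combination of $\mathbf{v}_0$ and $\mathbf{v}_m$, hence lies in $\gen{\mathbf{v}_0,\dots,\mathbf{v}_{n-1}}_{\R^+}$. Since $C$ is the nonnegative hull of its three extremal rays, the inclusion follows.

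The main obstacle is the convex-geometric step in (ii) and (iii): one must verify non-degeneracy of the triangular cone, keep track of the correct orientation of the third extremal ray, and see that the asymmetric choice between $\mathbf{v}_{n-1}$ (odd $n$) and $\mathbf{v}_{n-2}$ (even $n$) is precisely what makes $m$ even, so that $(-s)^m>0$ aligns with (i) to force $a,b>0$ rather than allowing a sign mismatch that would send $\mathbf{u}$ outside the desired cone.
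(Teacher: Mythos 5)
Correct, and essentially the paper's own argument: the auxiliary vector $\mathbf{w}$ that the paper chooses in $\gen{\mathbf{v}_{0},\mathbf{v}_{m}}_{\R^{+}}\cap\gen{\mathbf{v}_{1},\mathbf{e}_{3}}_{\R^{+}}$ (with $m=n-1$ or $n-2$) is exactly your third extremal ray, and both proofs then identify the triple-halfspace intersection with the simplicial cone spanned by $\mathbf{v}_{0}$, $\mathbf{v}_{1}$ and that ray, which lies in $\gen{\mathbf{v}_{0},\mathbf{v}_{m}}_{\R^{+}}$. The only inaccuracy is in your closing commentary: your derivation of $a,b>0$ uses only part (i) and $\sin(m-1)\varphi>0$ and never the evenness of $m$, so parity is not what makes this inclusion work --- it matters instead for the paper's variant (where $\mathbf{w}$ must be a non-negative combination of $\mathbf{v}_{1}$ and $\mathbf{e}_{3}$, which uses $(-s)^{m}>0$) and in the subsequent application of the lemma.
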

\begin{proof}
The first equation is easy to verify. Further, for $l=2,\dots,n-1$ we have $(\mathbf{v}_{1}\times \mathbf{v}_{0})\cdot \mathbf{v}_{l}=-\sin(l-1)\phi\ -s\sin l\phi - (-s)^{l}\sin\phi\leq -s\sin l\phi - (\sin(l-1)\phi - \sin\phi)< 0$, hence $\mathbf{v}_{0},\dots,\mathbf{v}_{n-1}\in H^{\circ}(\mathbf{v}_{0},\mathbf{v}_{1})$.

 Let $\mathbf{u}=(\cos\psi,\sin\psi, r)\in H(\mathbf{v}_{0},\mathbf{v}_{1})$ be such that $\pi/2<\psi<\pi$ and $r\in\R$. Choose $0\neq \mathbf{w}\in\gen{\mathbf{v}_{0},\mathbf{u}}_{\R^{+}} \cap \gen{\mathbf{v}_{1},\mathbf{e}_{3}}_{\R^{+}}$. Then $H(\mathbf{u},\mathbf{v}_{0})=H(\mathbf{w},\mathbf{v}_{0})$ and $H(\mathbf{v}_{1},\mathbf{e}_{3})=H(\mathbf{v}_{1},\mathbf{w})$ and it is not difficult to show that $H(\mathbf{v}_{0},\mathbf{v}_{1})\cap H(\mathbf{u},\mathbf{v}_{0})\cap H(\mathbf{v}_{1},\mathbf{e}_{3})=H(\mathbf{v}_{0},\mathbf{v}_{1})\cap H(\mathbf{w},\mathbf{v}_{0})\cap H(\mathbf{v}_{1},\mathbf{w})=\gen{\mathbf{v}_{0},\mathbf{v}_{1},\mathbf{w}}_{\R^{+}}\sub \gen{\mathbf{v}_{0},\mathbf{v}_{1},\mathbf{u}}_{\R^{+}}$. Now, choosing either $\mathbf{u}=\mathbf{v}_{n-1}$ or $\mathbf{u}=\mathbf{v}_{n-2}$ we get the first inclusion in appropriate cases (i) and (ii).
\end{proof}

\begin{theorem}\label{3d}
Let $p\in\mathcal{D}^{+}(\R)$ be of degree $2$ or $3$. Then $\deg_{+}(p)=\max\{\mathrm{n}^{\uparrow}(p),\mathrm{n}^{\downarrow}(p)\}$.

Moreover, let $p=(x+c)(x-\omega)(x-\overline{\omega})\in\R[x]$, where $\omega=r(\cos \varphi+\I\sin\varphi)$, $c,r>0$ and $\varphi\in(0,\pi/2)$. Put $n=\Big\lceil\frac{\pi}{\varphi}\Big\rceil$, $0\leq \eps=n\varphi-\pi<\varphi$ and $s=\min\{\frac{c}{r},\frac{r}{c}\}$. Then
\begin{enumerate}
\item[(i)]  $n\leq \deg_{+}(p)\leq n+1$,
\item[(ii)] $\deg_{+}(p)=n$ if and only if $\sin(\phi-\eps)\leq s\cdot\sin\eps + (-1)^{n+1}s^{n}\cdot\sin\phi$.
\end{enumerate}
\end{theorem}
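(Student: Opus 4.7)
The first step is to normalize. By Proposition~\ref{xp1} together with $\mathrm{n}^{\downarrow}(p)=\mathrm{n}^{\uparrow}(\tilde p)$ and the fact that reversing the coefficients is a $\deg_+$-preserving involution that sends $(c,r,\varphi)$ to $(1/c,1/r,\varphi)$ (and hence fixes $s$, $n$, $\varepsilon$), I may assume $c\le r$, so $s=c/r$. The rescaling $p(x)\mapsto r^{-3}p(rx)$ preserves $\deg_+$ and the pair $(s,\varphi)$, and reduces the problem to $r=1$, $p(x)=(x+s)(x^{2}-2\cos\varphi\,x+1)$. In this normalization, a natural basis of the recurrence for $p$ is $(\cos k\varphi)_k$, $(\sin k\varphi)_k$, $((-s)^{k})_k$, whose value-vectors are (up to a reordering of coordinates) precisely Lemma~\ref{xp3}'s $\mathbf{v}_k=(\cos k\varphi,\sin k\varphi,(-s)^{k})$. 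The existence of a nonzero $f=\sum_i f_i x^{i}\in(p\R[x])^{+}$ of degree $m$ translates, via $\sum_i f_i\mathbf{v}_i=0$ together with $f_i\ge 0$ and $f_m>0$, into the cone-membership statement $-\mathbf{v}_m\in\gen{\mathbf{v}_0,\dots,\mathbf{v}_{m-1}}_{\R^{+}}$.

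\textbf{Part (i): the bounds.} For the lower bound $\deg_+(p)\ge n$, Lemma~\ref{xp3}(i) puts $\mathbf{v}_2,\dots,\mathbf{v}_{n-1}$ inside $H^{\circ}(\mathbf{v}_0,\mathbf{v}_1)$ and $\mathbf{v}_0,\mathbf{v}_1$ on its boundary, so $\gen{\mathbf{v}_0,\dots,\mathbf{v}_{m-1}}_{\R^{+}}\sub H(\mathbf{v}_0,\mathbf{v}_1)$ while $-\mathbf{v}_m$ sits in the opposite open halfspace for every $2\le m\le n-1$; consequently $\deg_+(p)\ne m$ for such $m$, and $\deg_+(p)\ge n$. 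For the upper bound $\deg_+(p)\le n+1$, I invoke Theorem~\ref{B4} on the quadratic factor $q(x)=x^{2}-2\cos\varphi\,x+1$ (whose $\deg_+$ equals $n$) to obtain $h\in(q\R[x])^{+}$ with $\deg h=n$; then $f:=(x+s)h$ has nonnegative coefficients (the convolution of $(1,s)$ with those of $h$), is divisible by $p=(x+s)q$, and has degree $n+1$.

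\textbf{Part (ii): the characterisation.} Using Lemma~\ref{xp3}'s identity with $k=1$, $l=n$ and substituting $n\varphi=\pi+\varepsilon$ (whence $\sin(n-1)\varphi=\sin(\varphi-\varepsilon)$ and $\sin n\varphi=-\sin\varepsilon$), I compute
\[
(\mathbf{v}_1\times\mathbf{v}_0)\cdot\mathbf{v}_n=-\sin(\varphi-\varepsilon)+s\sin\varepsilon+(-1)^{n+1}s^{n}\sin\varphi.
\]
The theorem's inequality is therefore exactly the assertion $-\mathbf{v}_n\in H(\mathbf{v}_0,\mathbf{v}_1)$. Necessity for $\deg_+(p)=n$ follows from the cone-in-halfspace inclusion of Part (i). For sufficiency I apply Lemma~\ref{xp3}(ii)/(iii) with $\mathbf{u}=\mathbf{v}_{n-1}$ (odd $n$) or $\mathbf{u}=\mathbf{v}_{n-2}$ (even $n\ge 4$), whose arguments $\pi-(\varphi-\varepsilon)$ and $\pi-(2\varphi-\varepsilon)$ both lie in $(\pi/2,\pi)$ throughout the relevant range $\varphi\in(\pi/n,\pi/(n-1)]$, and verify the remaining two halfspace conditions on $-\mathbf{v}_n$. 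The condition $-\mathbf{v}_n\in H(\mathbf{v}_1,\mathbf{e}_3)$ is immediate since $(\mathbf{v}_1\times\mathbf{e}_3)\cdot\mathbf{v}_n=-\sin(\varphi-\varepsilon)<0$; the condition $-\mathbf{v}_n\in H(\mathbf{v}_{n-1},\mathbf{v}_0)$ or $H(\mathbf{v}_{n-2},\mathbf{v}_0)$ expands via Lemma~\ref{xp3}'s formula into a sum whose signs are controlled by $s\le 1$ together with elementary sine inequalities such as $\sin(\varphi-\varepsilon)\le\sin\varphi$, $\sin 2\varphi\ge 0$, and $\sin(2\varphi-\varepsilon)\ge 0$. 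The parity-dependent bookkeeping in this last check is the main technical obstacle, but all estimates remain elementary.
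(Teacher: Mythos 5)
Your proposal is correct and follows essentially the same route as the paper: the same normalization to $r=1$, $s=\min\{c/r,r/c\}\le 1$, the same translation of $\deg_+$ into cone membership of $-\mathbf{v}_n$, the lower bound and condition (ii) read off from the halfspace $H(\mathbf{v}_0,\mathbf{v}_1)$ via Lemma~\ref{xp3}(i), the upper bound from Theorem~\ref{B4} applied to the quadratic factor, and sufficiency in (ii) via Lemma~\ref{xp3}(ii)/(iii) with exactly the parity-split inequalities $\sin\phi+s^{n-1}\sin\eps-s^{n}\sin(\phi-\eps)\geq 0$ and $\sin 2\phi+s^{n-2}\sin\eps+s^{n}\sin(2\phi-\eps)\geq 0$ that the paper checks. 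The only difference is presentational (you derive the lower bound from the halfspace argument rather than directly from Theorem~\ref{B4}, and you leave the degree-$2$ and degenerate cubic cases implicit, as the paper essentially does too).
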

\begin{proof}
Using \ref{B4} and \ref{xp1} we only need to prove the case of $p=(x+c)(x-\omega)(x-\overline{\omega})$, where $\omega=r(\cos \varphi+\I\sin\varphi)$, $c,r>0$ and $\varphi\in(0,\pi/2)$.
Now, (i) follows immediately from \ref{B4}. Further, with help of substitutions $x=t\cdot x'$, $t>0$ and $x=1/x'$ we have that $\deg_{+}(p)=\deg_{+}(q)$ and $\max\{\mathrm{n}^{\uparrow}(p),\mathrm{n}^{\downarrow}(p)\}=\max\{\mathrm{n}^{\uparrow}(q),\mathrm{n}^{\downarrow}(q)\}$ for $q=(x+s)(x-\frac{\omega}{r})(x-\frac{\overline{\omega}}{r})$, where $s=\min\{\frac{c}{r},\frac{r}{c}\}\leq 1$. Put $\mathbf{v}_{k}=\mathbf{v}_{k}(q)$ for $k\in\N_{0}$.

By \ref{xp3}(i) we have $\Big\lceil\frac{\pi}{\varphi}\Big\rceil=n\leq \mathrm{n}^{\uparrow}(q)$. If $\deg_{+}(q)=n$, then $\deg_{+}(q)=\mathrm{n}^{\uparrow}(q)$ by \ref{xp1} and $\det(\mathbf{v}_{0},\mathbf{v}_{1},\mathbf{v}_{n})\leq 0$ by \ref{xp3}(i).
On the other hand, if $0\geq\det(\mathbf{v}_{0},\mathbf{v}_{1},\mathbf{v}_{n})=(\mathbf{v}_{0}\times \mathbf{v}_{1})\cdot \mathbf{v}_{n}$, then we need to show that $-\mathbf{v}_{n}\in \gen{\mathbf{v}_{0},\dots,\mathbf{v}_{n-1}}_{\R^{+}}$. Let $H(\cdot,\cdot)$ and $\mathbf{e}_{3}$ have the same meaning as in \ref{xp3}. Clearly, $-\mathbf{v}_{n}\in H(\mathbf{v}_{0},\mathbf{v}_{1})\cap H(\mathbf{v}_{1},\mathbf{e}_{3})$.

 If $n$ is odd then $(\mathbf{v}_{n-1}\times \mathbf{v}_{0})\cdot (-\mathbf{v}_{n})= \sin\phi + s^{n-1}\sin\eps - s^{n}\sin(\phi-\eps)\geq 0$ by \ref{xp3}, hence $-\mathbf{v}_{n}\in H(\mathbf{v}_{n-1},\mathbf{v}_{0})$.

If $n$ is even then $(\mathbf{v}_{n-2}\times \mathbf{v}_{0})\cdot (-\mathbf{v}_{n})= \sin 2\phi+s^{n-2}\sin\eps+s^{n}\sin(2\phi-\eps)\geq 0$. Thus $-\mathbf{v}_{n}\in H(\mathbf{v}_{n-2},\mathbf{v}_{0})$.

Putting it all together we conclude with $-\mathbf{v}_{n}\in \gen{\mathbf{v}_{0},\dots,\mathbf{v}_{n-1}}_{\R^{+}}$ by \ref{xp3}(ii),(iii) and $\deg_{+}(q)=n$. The rest is now easy and the condition in (ii) is only an equivalent version of the inequality $(\mathbf{v}_{0}\times \mathbf{v}_{1})\cdot \mathbf{v}_{n}\leq 0$.
\end{proof}

\begin{remark}\label{ex4}
 The lower estimation $\mathrm{n}^{\downarrow}(p)$ provides no additional information (i.e. $\mathrm{n}^{\downarrow}(p)=\deg(p)$) in the case when both the linear and constant terms have coefficients of the same (non-zero) signs. In particular, the equality in \ref{3d} does not hold for degree 4. For instance, for $p=(x+2)(x+1/2)(x-\omega)(x-\overline{\omega})$, $\omega=\cos\phi+\I\sin\phi$, is $\mathrm{n}^{\downarrow}(p)=\mathrm{n}^{\uparrow}(p)=4$, by \ref{xp2}, while $\deg_{+}(p)\geq \Big\lceil\frac{\pi}{\varphi}\Big\rceil$.

Apart of this, the result in \ref{3d} (together with \ref{xp2}) provides a simple way (by choosing particular halfspaces) how to determine the value of $\deg_{+}$ and we ask therefore for a possible generalization for polynomials of higher degrees.
\end{remark}

For an estimation of $\deg_{+}$ for a general polynomial is useful to investigate this value for powers of polynomials. Eaton \cite{eaton}  provided $\deg_{+}$ in the case when a quadratic polynomial  vanishes on a root of unity (see \ref{corol}). Following basically the same idea we can easily get the  estimation in \ref{powerpol}.

\begin{theorem}\cite{eaton}\label{corol}
Let $n,k\in\N$ and $p=(x-e^{\I\pi/n})(x-e^{-\I\pi/n})\in\R[x]$. Then $\deg_{+}(p^{k})= k\deg_{+}(p)=kn$.
\end{theorem}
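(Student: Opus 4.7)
I would establish the two bounds $\deg_{+}(p^{k})\leq kn$ and $\deg_{+}(p^{k})\geq kn$ separately. For the upper bound, put $\omega=e^{\I\pi/n}$; then $\omega^{n}=-1$, so $p\mid x^{n}+1$, whence $p^{k}\mid(x^{n}+1)^{k}$. Since $(x^{n}+1)^{k}$ has non-negative coefficients and degree $kn$, this already gives $\deg_{+}(p^{k})\leq kn$.

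For the lower bound I argue by contradiction: suppose $f=\sum_{m}a_{m}x^{m}\in\R^{+}[x]$ is non-zero, divisible by $p^{k}$, and $\deg f\leq kn-1$. From $f^{(j)}(\omega)=0$ for $j=0,\dots,k-1$, after dividing by $j!$ and multiplying by $\omega^{j}$ I obtain $\sum_{m}\binom{m}{j}a_{m}\omega^{m}=0$. Since $\binom{\cdot}{0},\dots,\binom{\cdot}{k-1}$ form a basis of the $\R$-polynomials of degree $<k$ in their argument, this extends to $\sum_{m}q(m)a_{m}\omega^{m}=0$ for every $q\in\R[x]$ with $\deg q<k$.

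The central step is to test this identity against $q(x):=\prod_{j=1}^{k-1}(jn-x)$ (degree $k-1$) and take the imaginary part, yielding $\sum_{m=0}^{kn-1}a_{m}q(m)\sin(m\pi/n)=0$. I then check that each summand is non-negative. The range $\{0,\dots,kn-1\}$ partitions into the blocks $B_{i}=\{(i-1)n+1,\dots,in-1\}$ for $i=1,\dots,k$ together with the multiples $0,n,\dots,(k-1)n$. On $B_{i}$ one has $m\pi/n\in((i-1)\pi,i\pi)$, so $\sin(m\pi/n)$ has sign $(-1)^{i-1}$; the factors of $q(m)$ satisfy $jn-m<0$ exactly for $j<i$ and $jn-m>0$ for $j\geq i$, so $q(m)$ has the same sign $(-1)^{i-1}$ and the product is strictly positive. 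At each multiple of $n$ in the range, either $\sin(m\pi/n)=0$ or $q(m)=0$. With all $a_{m}\geq 0$, the vanishing of the sum forces $a_{m}=0$ for every $m$ that is not a multiple of $n$, so $f(x)=\tilde{f}(x^{n})$ for some $\tilde{f}\in\R[y]$ with $\deg\tilde{f}\leq k-1$.

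To conclude: the substitution $y=x^{n}$ has non-vanishing derivative $n\omega^{n-1}$ at $x=\omega$ and sends $\omega$ to $-1$, so the multiplicity of $\omega$ as a root of $\tilde{f}(x^{n})$ coincides with the multiplicity of $-1$ as a root of $\tilde{f}$. Hence $p^{k}\mid f$ forces $(y+1)^{k}\mid\tilde{f}(y)$, which combined with $\deg\tilde{f}\leq k-1$ is possible only if $\tilde{f}=0$, contradicting $f\neq 0$. The main obstacle is the design of $q$: it must have degree strictly less than $k$ (the only regime in which the $k$ derivative conditions produce it) while simultaneously providing exactly $k-1$ sign flips aligned with the $k$ alternating sign-regions of $\sin(m\pi/n)$ on $[0,kn]$. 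This tight match of $k-1$ roots to $k-1$ sign transitions is precisely why the bound $kn$ is sharp, with extremal polynomial $c(x^{n}+1)^{k}$.
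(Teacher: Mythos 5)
Your proof is correct, and it is worth noting that the paper itself gives no argument for this statement at all --- Theorem \ref{corol} is simply quoted from Eaton's note \cite{eaton} --- so yours is a genuinely self-contained alternative. The upper bound via $p^{k}\mid (x^{n}+1)^{k}$ is the expected extremal construction. The substance is your lower bound: converting the $k$ vanishing derivative conditions at $\omega=e^{\I\pi/n}$ into the identity $\sum_{m}q(m)a_{m}\omega^{m}=0$ for all $q$ of degree $<k$ (via the binomial basis) and then testing with $q(m)=\prod_{j=1}^{k-1}(jn-m)$ is a clean choice: I checked the sign bookkeeping, and indeed on each block $\{(i-1)n+1,\dots,in-1\}$ both $q(m)$ and $\sin(m\pi/n)$ carry the sign $(-1)^{i-1}$, the multiples of $n$ contribute nothing, and positivity of the $a_{m}$ then kills all coefficients off the arithmetic progression $n\N_{0}$; the final multiplicity count for $\tilde f(x^{n})$ at $\omega$ (simple root of $x^{n}+1$, so multiplicity of $\omega$ in $f$ equals multiplicity of $-1$ in $\tilde f$) correctly rules out $\deg f\le kn-1$. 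Conceptually your imaginary-part functional is the same kind of separating linear functional the authors use for their lower bound $\mathrm{n}^{\uparrow}(p)\le\deg_{+}(p)$ in Proposition \ref{xp1}, but you build one explicitly adapted to $p^{k}$, which is what makes the bound sharp rather than merely one-sided. One small caveat: your lower-bound argument (and the theorem's claim $\deg_{+}(p^{k})=kn$ itself) requires $n\ge 2$, since for $n=1$ one has $p=(x+1)^{2}\in\R^{+}[x]$ and $\deg_{+}(p^{k})=2k\neq k$; you should state that hypothesis, which the quoted theorem leaves implicit.
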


\begin{theorem}\label{powerpol}
 Let $p\in\R[x]\setminus\R^{+}[x]$ be a monic quadratic polynomial, i.e. $p=(x-\omega)(x-\overline{\omega})$, where $\omega=r(\cos \varphi+\I\sin\varphi)$, $r>0$ and $\varphi\in(0,\frac{\pi}{2})$. Let $2\leq k\in\N$.
Then
\begin{equation}\label{A3}
\Big\lceil\frac{k\pi}{\varphi}\Big\rceil\leq \deg_{+}(p^{k})\leq k\Big\lceil\frac{\pi}{\varphi}\Big\rceil
\end{equation}
 In particular, $k\deg_{+}(p) -k+1\leq \deg_{+}(p^{k})\leq k\deg_{+}(p).$
\end{theorem}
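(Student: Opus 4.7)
For the upper bound I appeal directly to Theorem~\ref{B4}: it furnishes some $f_{0}=h_{0}p\in (p\R[x])^{+}$ with $\deg f_{0}=\lceil\pi/\varphi\rceil$, and then $f_{0}^{k}=h_{0}^{k}p^{k}$ lies in $(p^{k}\R[x])^{+}$ (a product of non-negative-coefficient polynomials again has non-negative coefficients) with degree $k\lceil\pi/\varphi\rceil$, giving the right-hand side of~\eqref{A3}.

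For the lower bound $\deg_{+}(p^{k})\geq\lceil k\pi/\varphi\rceil$ I plan a dual/test-functional argument. After the substitution $x\mapsto x/r$, which preserves non-negativity of coefficients and brings the root to $e^{\I\varphi}$, I may assume $r=1$. Let $0\neq f=\sum_{m=0}^{N}a_{m}x^{m}\in (p^{k}\R[x])^{+}$, and suppose for a contradiction that $N\varphi<k\pi$, i.e.\ $L:=\lfloor N\varphi/\pi\rfloor\leq k-1$. The $k$ conditions $f^{(j)}(\omega)=0$ (for $j=0,\dots,k-1$), after multiplication by $\omega^{j}/j!$ and taking imaginary parts, yield
\[
\sum_{m}\binom{m}{j}a_{m}\sin(m\varphi)=0,\qquad j=0,\dots,k-1.
\]
Because $\{\binom{m}{j}\}_{j=0}^{k-1}$ is an $\R$-basis of polynomials in $m$ of degree $\leq k-1$, this gives $\sum_{m}Q(m)\,a_{m}\sin(m\varphi)=0$ for every $Q\in\R[m]$ with $\deg Q\leq k-1$. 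The key step is the choice
\[
Q(m):=\prod_{l=1}^{L}\Bigl(\tfrac{l\pi}{\varphi}-m\Bigr),
\]
of degree $L\leq k-1$, calibrated so that for $m\in (l_{0}\pi/\varphi,(l_{0}+1)\pi/\varphi)$ both $Q(m)$ and $\sin(m\varphi)$ carry the sign $(-1)^{l_{0}}$; hence $Q(m)\sin(m\varphi)\geq 0$ on the whole range $\{0,1,\dots,N\}$, vanishing only when $m=0$ or $m\varphi\in\pi\Z$. The zero sum then forces $a_{m}=0$ for every $m\geq 1$ with $m\varphi\notin\pi\Z$.

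A short case split completes the contradiction. If $\pi/\varphi\notin\Q$ the only surviving coefficient is $a_{0}$, whence $f=a_{0}$ and $f(\omega)=0$ gives $f=0$. If $\pi/\varphi=\alpha/\beta$ in lowest terms, the surviving support of $f$ lies in $\alpha\Z_{\geq 0}$, so $f(x)=g(x^{\alpha})$ for some $g\in\R^{+}[y]$ with $\deg g=N/\alpha$; the $k$-fold vanishing of $f$ at $\omega$ descends by an iterated chain-rule computation to the vanishing of $g^{(i)}$ at $\omega^{\alpha}=(-1)^{\beta}$ for $i=0,\dots,k-1$. For $\beta$ even this forces $g(1)=0$ and hence $g=0$; for $\beta$ odd, $(y+1)^{k}\mid g$, so $\deg g\geq k$ and $N\geq k\alpha$, which contradicts $N<k\alpha/\beta\leq k\alpha$.

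The main obstacle is the sign bookkeeping in the definition of $Q$: one must verify that the parity of the number of negative factors $(l\pi/\varphi-m)$ matches the parity of the half-period of $\sin(m\varphi)$ uniformly on $\{0,\dots,N\}$. Once this check is in hand, the rest is routine. The ``in particular'' bound is easy: from $\pi/\varphi>\lceil\pi/\varphi\rceil-1$ one obtains $k\pi/\varphi>k\lceil\pi/\varphi\rceil-k$, hence $\lceil k\pi/\varphi\rceil\geq k\lceil\pi/\varphi\rceil-k+1=k\deg_{+}(p)-k+1$, while the upper inequality $k\deg_{+}(p)$ is already supplied by~\eqref{A3}.
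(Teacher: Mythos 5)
Your proof is correct. The paper gives no written proof of Theorem~\ref{powerpol} at all --- it only remarks that the estimate follows ``basically the same idea'' as Eaton's Theorem~\ref{corol} --- so your write-up is a genuine completion rather than a deviation. The upper bound, powering a minimal-degree non-negative multiple $f_{0}=h_{0}p$ supplied by Theorem~\ref{B4} to get $f_{0}^{k}=h_{0}^{k}p^{k}$ of degree $k\lceil\pi/\varphi\rceil$, is surely the intended argument. The substantive part is the lower bound, and your test-functional argument checks out: multiplying $f^{(j)}(\omega)=0$ by $\omega^{j}/j!$ and taking imaginary parts gives $\sum_{m}Q(m)a_{m}\sin(m\varphi)=0$ for every polynomial $Q$ of degree at most $k-1$, and with $Q(m)=\prod_{l=1}^{L}(l\pi/\varphi-m)$, $L=\lfloor N\varphi/\pi\rfloor\le k-1$, the sign bookkeeping you flag as the main obstacle does work uniformly: every $m\in\{0,\dots,N\}$ with $m\varphi\notin\pi\Z$ has $l_{0}:=\lfloor m\varphi/\pi\rfloor\le L$, so exactly $l_{0}$ factors of $Q(m)$ are negative and $Q(m)\sin(m\varphi)>0$, forcing $a_{m}=0$ there. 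The residual rational-angle case is also handled correctly: the surviving support lies in $\alpha\Z$, so $f(x)=g(x^{\alpha})$ with $\deg f=\alpha\deg g$, the multiplicity of $\omega$ in $f$ descends to the multiplicity of $\omega^{\alpha}=(-1)^{\beta}$ in $g$ (your chain-rule induction, or simply factoring $g$), and both parities of $\beta$ yield the contradiction ($g(1)=0$ versus non-negative coefficients, or $(y+1)^{k}\mid g$ versus $N<k\alpha/\beta\le k\alpha$). The concluding ``in particular'' estimate is a correct integer-rounding deduction. In spirit this is the same functional/half-space style of reasoning the paper uses in \ref{xp1} and \ref{xp3}, and presumably what Eaton's method amounts to; your version has the merit of being self-contained and of covering arbitrary $r>0$ and irrational $\pi/\varphi$ explicitly.
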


The next proposition shows that both the upper and lower estimations in \ref{powerpol} can be reached.

\begin{proposition}\label{power-ex-proposition}
For every $n\geq 3$, $k\geq 2$ there are monic quadratic polynomials $p_{1},p_{2}$ such that

(i) $\deg_{+}(p_{1})=n$ and $\deg_{+}(p_{1}^{k})=k\deg_{+}(p_{1})$ and

(ii) $\deg_{+}(p_{2})=n$ and $\deg_{+}(p_{2}^{k})=k\deg_{+}(p_{2}) -k+1$.
\end{proposition}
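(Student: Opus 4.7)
For (i), I would take $p_1(x) = (x - e^{\I\pi/n})(x - e^{-\I\pi/n}) = x^2 - 2\cos(\pi/n)x + 1$. Theorem \ref{B4} gives $\deg_{+}(p_1) = \lceil\pi/(\pi/n)\rceil = n$, and since $e^{\I\pi/n}$ is a root of unity, Eaton's Theorem \ref{corol} immediately yields $\deg_{+}(p_1^k) = kn = k\deg_{+}(p_1)$, settling (i).

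For (ii), the plan is to take $p_2(x) = x^2 - 2\cos\phi\cdot x + 1$ with $\phi \in [\pi/(n-1+\tfrac{1}{k}),\,\pi/(n-1))$. For such $\phi$ one has $\pi/\phi \in (n-1,\,n]$ and $k\pi/\phi \in (k(n-1),\,k(n-1)+1]$, so Theorem \ref{B4} and the lower bound of Theorem \ref{powerpol} already give $\deg_{+}(p_2) = n$ and $\deg_{+}(p_2^k) \geq k(n-1)+1$. The substantive task is to realise this lower bound by producing an explicit non-negative polynomial of degree $k(n-1)+1$ divisible by $p_2^k$.

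My guiding observation is the limit case $\phi^{\ast} := \pi/(n-1)$, at which $\omega^{n-1} = -1$ forces $p_2 \mid x^{n-1}+1$; hence $F^{\ast}(x) := (x+1)(x^{n-1}+1)^k$ is already a non-negative polynomial of degree $k(n-1)+1$ divisible by $p_2^k$. For $\phi$ strictly less than $\phi^{\ast}$, I would look for $F_\phi = p_2^k \cdot g_\phi$ inside the $(k(n-3)+2)$-dimensional real vector space of polynomials of degree $\le k(n-1)+1$ divisible by $p_2^k$, and then choose $g_\phi$ so that $F_\phi$ still has non-negative coefficients. The main obstacle is that $F^{\ast}$ typically has zero coefficients at the ``gap'' exponents (those not of the form $j(n-1)$ or $j(n-1)+1$), so a naive continuity perturbation could easily turn these coefficients negative; hence $g_\phi$ must be shifted within the feasibility cone in a compensating direction, which in practice means combining $F^{\ast}$ with an auxiliary element of the cone that contributes strictly positive values at the gap exponents.

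The mechanism is already explicit in small cases. For $n=3$, $k=2$, taking $\phi = \arccos(\tfrac{1}{4})$ and $p_2(x) = x^2 - \tfrac{1}{2}x + 1$ yields $F(x) = (x+1)\,p_2(x)^2 = x^5 + \tfrac{5}{4}x^3 + \tfrac{5}{4}x^2 + 1$, a non-negative polynomial of degree $5 = 2\cdot 3 - 2 + 1$ divisible by $p_2^2$. For $n=4$, $k=2$, taking $\phi = \arccos(\tfrac{3}{5})$ and $p_2(x) = x^2 - \tfrac{6}{5}x + 1$, one verifies that $F(x) = (x+1)\bigl(x^2 + \tfrac{7}{5}x + 1\bigr)\,p_2(x)^2$ is a non-negative polynomial of degree $7 = 2\cdot 4 - 2 + 1$ divisible by $p_2^2$. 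For arbitrary $n \ge 3$, $k \ge 2$ I expect an analogous explicit choice of $g_\phi$ (with $\phi$ chosen near $\pi/(n-1)$) to settle the upper bound; extracting a single closed-form expression valid for all $(n,k)$ is the main challenge.
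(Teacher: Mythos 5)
Your part (i) matches the paper exactly, and your setup for (ii) (choosing $\phi$ just below $\pi/(n-1)$ so that $\lceil\pi/\phi\rceil=n$ and $\lceil k\pi/\phi\rceil=k(n-1)+1$, then invoking Theorems \ref{B4} and \ref{powerpol} for $\deg_{+}(p_2)=n$ and the lower bound) is also the paper's route. But the upper bound -- exhibiting a non-negative multiple of $p_2^k$ of degree $kn-k+1$ -- is exactly where your argument stops being a proof: you correctly diagnose that the limit polynomial $(x+1)(x^{n-1}+1)^k$ has zero coefficients at the gap exponents, so plain continuity fails, and then you only verify two small cases and state that you ``expect'' an analogous choice of $g_\phi$ in general. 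That expectation is the whole content of the statement; as written, (ii) is unproved for general $n,k$.

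The paper closes precisely this gap with one further idea you are missing: instead of the factor $(x+1)(x^{n-1}+1)$, use the \emph{Moreover} clause of Theorem \ref{B4}. For $\widetilde p_2=(x-e^{\I\pi/(n-1)})(x-e^{-\I\pi/(n-1)})$ one has $\deg_{+}(\widetilde p_2)=n-1$, and Theorem \ref{B4} supplies a multiple $f$ of $\widetilde p_2$ of degree $n$ with \emph{all} coefficients strictly positive. Setting $g=(x^{n-1}+1)^{k-1}f$ gives a polynomial divisible by $\widetilde p_2^{\,k}$ (since $\widetilde p_2\mid x^{n-1}+1$) of degree $kn-k+1$, and because $f$ covers the full contiguous range of degrees $0,\dots,n$ with positive coefficients, the blocks $[j(n-1),\,j(n-1)+n]$ overlap and every coefficient of $g$ is strictly positive -- there are no gap exponents left. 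Now the naive continuity argument you wanted does work: keep the quotient $h=g/\widetilde p_2^{\,k}$ fixed and let $\phi\to\pi/(n-1)^{-}$; then $h\cdot p_2^k\to g$ coefficientwise, so for $\phi$ close enough one gets $h\,p_2^k\in\R^{+}[x]$ while still having $\lceil\pi/\phi\rceil=n$ and $\lceil k\pi/\phi\rceil=k(n-1)+1$. This removes the need for any case-by-case ``compensating direction in the cone,'' which is the part your proposal leaves open.
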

\begin{proof}
(i) follows from \ref{corol}.
For (ii) we first take $\widetilde{p}_{2}=(x-e^{\I\pi/(n-1)})(x-e^{-\I\pi/(n-1)})$. By \ref{B4} there is $f=\sum^{n}_{i=0}a_{i}x^{i}$ divisible by $\widetilde{p}_{2}$ such that $a_{i}>0$ for every $i=0,\dots,n+1$. Now $g=(x^{n-1}+1)^{k-1}f$ is divisible by $\widetilde{p}_{2}^{k}$, $\deg(g)=kn-k+1$ and every $i$-th coefficient of $g$ is positive for $i=0,\dots,\deg(g)$. Using continuity we get $\varphi\in\R$ such that $\Big\lceil\frac{\pi}{\varphi}\Big\rceil=n$, $\Big\lceil\frac{k\pi}{\varphi}\Big\rceil=k(n-1)+1$ and $(g/\widetilde{p}_{2}^{k})p_{2}^{k}\in\R^{+}[x]$, where $p_{2}=(x-e^{\I\varphi})(x-e^{-\I\varphi})$. Hence, by \ref{powerpol}, $\deg_{+}(p_{2}^{k})=k\deg_{+}(p_{2}) -k+1$.
\end{proof}

\begin{example}\label{power-ex}
Even if the lower and upper bound in \ref{powerpol}(\ref{A3}) for a given polynomial differ, it may happen that the upper bound is achieved. Taking $p=x^2-x+3$, we have $5$ the lower- and $6$ the upper bound for $p^2$ in \ref{powerpol}(\ref{A3}). But $\deg_{+}(p^2)=6$, since $(x+c)p^{2}= x^5 + (c-2)x^4 + (7-2c)x^3 +(7c- 6)x^2 + (9-6c)x + 9c\notin\R^{+}[x]$ for any $c>0$.
\end{example}

The previous examples show that knowing only $\Big\lceil\frac{k\pi}{\varphi}\Big\rceil$ and $k\Big\lceil\frac{\pi}{\varphi}\Big\rceil$ is generally not enough for determining $\deg_{+}(p^{k})$ in \ref{powerpol}. Nevertheless, based upon numerical computing, we conjecture that a better estimation than \ref{powerpol}(\ref{A3}) might be true, namely
$\deg_{+}(p^{k})\leq \Big\lceil\frac{k\pi}{\varphi}\Big\rceil+1$
for $p$ a quadratic polynomial.

Now we turn to the general case. Using formal power series, Tur\'an \cite{turan} has shown, that for a polynomial $p\in\R[x]$ of degree $d$ with zeros outside the angle-domain $\set{0\neq z\in\C}{|\mathrm{arc} (z)|<\varphi_{0}\leq \pi/2}$ is
\begin{equation}\label{equa2}
\deg_{+}(p)\leq \frac{d}{2}\Big(\Big\lfloor\frac{\pi}{\varphi_{0}}\Big\rfloor+1\Big).
\end{equation}
In the next theorem we make a slight improvement of this result and provide a simpler proof.

\begin{theorem}\label{mahler}
Let $p\in\mathcal{D}^{+}(\R)\setminus\R^{+}[x]$ and $d=\deg(p)$. Let $\varphi_{0}$ be the least positive argument of all non-zero roots of $p$. Then $$\deg_{+}(p)\leq\Big\lfloor \frac{d}{2}\Big\rfloor\Big(\Big\lceil\frac{\pi}{\varphi_{0}}\Big\rceil-2\Big)+d.$$
\end{theorem}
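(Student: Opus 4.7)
The plan is to factor $p$ into its real and complex roots and apply Theorem \ref{B4} separately to each quadratic factor arising from a pair of complex conjugate roots. Since $p\in\mathcal{D}^{+}(\R)$, the leading coefficient is positive and $p$ has no positive real roots, so every non-zero real root is negative. I would write
\[
p=c\cdot x^{n_{0}}\cdot\prod_{j=1}^{n_{-}}(x+a_{j})\cdot\prod_{i=1}^{m}(x-\omega_{i})(x-\overline{\omega_{i}}),
\]
where $c>0$, $a_{j}>0$, the complex roots $\omega_{i}=r_{i}(\cos\varphi_{i}+\I\sin\varphi_{i})$ are listed with multiplicity (so each conjugate pair contributes separately), $\varphi_{i}\in(0,\pi)$, and $\varphi_{0}=\min_{i}\varphi_{i}$. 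Then $n_{0}+n_{-}+2m=d$, and the hypothesis $p\notin\R^{+}[x]$ forces $m\geq 1$ and $\varphi_{0}<\pi$, hence $\lceil\pi/\varphi_{0}\rceil\geq 2$.

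Next I would apply Theorem \ref{B4} to each quadratic factor $q_{i}=(x-\omega_{i})(x-\overline{\omega_{i}})$ to obtain $h_{i}\in\R[x]$ such that $h_{i}q_{i}\in\R^{+}[x]$ and $\deg(h_{i}q_{i})=\lceil\pi/\varphi_{i}\rceil$, so $\deg(h_{i})=\lceil\pi/\varphi_{i}\rceil-2\geq 0$. Setting $h=\prod_{i=1}^{m}h_{i}$, the product
\[
hp=c\cdot x^{n_{0}}\cdot\prod_{j=1}^{n_{-}}(x+a_{j})\cdot\prod_{i=1}^{m}\bigl(h_{i}q_{i}\bigr)
\]
is a product of polynomials each lying in $\R^{+}[x]$, hence $hp\in\R^{+}[x]$.

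Finally I would estimate the degree:
\[
\deg(hp)=n_{0}+n_{-}+\sum_{i=1}^{m}\lceil\pi/\varphi_{i}\rceil=d+\sum_{i=1}^{m}\bigl(\lceil\pi/\varphi_{i}\rceil-2\bigr).
\]
Since $\varphi_{i}\geq\varphi_{0}$ and each term in the sum is non-negative, the sum is at most $m(\lceil\pi/\varphi_{0}\rceil-2)$; and $2m\leq d$ gives $m\leq\lfloor d/2\rfloor$. Combining these two monotonicity steps yields $\deg_{+}(p)\leq\deg(hp)\leq d+\lfloor d/2\rfloor(\lceil\pi/\varphi_{0}\rceil-2)$, which is the desired inequality.

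There is no real obstacle here — the argument is essentially bookkeeping after invoking Theorem \ref{B4} factor by factor. The only mildly delicate points are (a) checking that the case $\varphi_{0}=\pi$ (which would force $p\in\R^{+}[x]$ and thereby make $\lceil\pi/\varphi_{0}\rceil-2$ negative) is excluded by hypothesis, so all individual exponents $\lceil\pi/\varphi_{i}\rceil-2$ and the majorant $\lceil\pi/\varphi_{0}\rceil-2$ are genuinely non-negative, and (b) that pairing the complex roots by conjugates (rather than, say, mixing complex and real roots together) is what replaces Turán's factor $(\lfloor\pi/\varphi_{0}\rfloor+1)d/2$ by the tighter $(\lceil\pi/\varphi_{0}\rceil-2)\lfloor d/2\rfloor+d$. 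The improvement over (\ref{equa2}) shows up precisely when $\pi/\varphi_{0}$ is an integer or $d$ is odd, since then $\lfloor d/2\rfloor<d/2$ or $\lceil\pi/\varphi_{0}\rceil-2<\lfloor\pi/\varphi_{0}\rfloor-1$.
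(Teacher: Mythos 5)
Your proof is correct and follows essentially the same route as the paper: factor $p$ over $\R$, apply Theorem \ref{B4} to the quadratic factors coming from complex conjugate pairs, multiply the resulting cofactors, and do the bookkeeping with $\varphi_{i}\geq\varphi_{0}$, $m\leq\lfloor d/2\rfloor$ and $\lceil\pi/\varphi_{0}\rceil\geq 2$. The only cosmetic difference is that the paper applies \ref{B4} only to the irreducible factors lying outside $\R^{+}[x]$ (absorbing the benign quadratics into the non-negative part at cost equal to their degree), whereas you apply it to every conjugate pair, which gives the same count.
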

\begin{proof}
Let $p=p_{1}\dots p_{r}q_{1}\dots q_{s}$ be a decomposition into irreducible polynomials in $\R[x]$, where $r\geq 1$, $s\geq 0$ and $p_{i}\in\R[x]\setminus\R^{+}[x]$, $q_{j}\in\R^{+}[x]$ for $i=1,\dots,r$, $j=1,\dots,s$. By \ref{B4} we have $\deg_{+}(p)\leq (d-2r) +\sum_{i=1}^{r}\deg_{+}(p_{i})\leq (d-2r) + r\lceil\frac{\pi}{\varphi_{0}}\rceil\leq \lfloor \frac{d}{2}\rfloor(\lceil\frac{\pi}{\varphi_{0}}\rceil-2)+d$, since $r\leq\lfloor \frac{d}{2}\rfloor$.
\end{proof}

\begin{remark}
We show that the estimation in \ref{mahler} can be reached for some polynomials, while the bound (\ref{equa2}) is bigger in these cases.

 Let $k,n\in\N$. Put $p_{1}=\big((x-e^{\I\pi/n})(x-e^{-\I\pi/n})\big)^{k}\in\R[x]$, $\varphi_{0}=\pi/n$ and $d_{1}=\deg(p_{1})$. Then, by \ref{corol}, is $\deg_{+}(p_{1})=kn=\Big\lfloor \frac{d_{1}}{2}\Big\rfloor\Big(\Big\lceil\frac{\pi}{\varphi_{0}}\Big\rceil-2\Big)+d_{1}$, but is less than $k(n+1)=\frac{d_{1}}{2}\Big(\Big\lfloor\frac{\pi}{\varphi_{0}}\Big\rfloor+1\Big)$.

 Similarly, for $p_{2}=p_{1}(x+c)$, $d_{2}=\deg(p_{2})$ and $c>0$ big enough we have $D(p_{1})=D(p_{2})$ and hence $\deg_{+}(p_{2})=kn+1=\Big\lfloor \frac{d_{2}}{2}\Big\rfloor\Big(\Big\lceil\frac{\pi}{\varphi_{0}}\Big\rceil-2\Big)+d_{2}$, while the estimation in (\ref{equa2}) is  $(k+\frac{1}{2})(n+1)=\frac{d_{2}}{2}\Big(\Big\lfloor\frac{\pi}{\varphi_{0}}\Big\rfloor+1\Big)$.
\end{remark}

If a polynomial is given only by its coefficients, it may be difficult to determine arguments of its zeros. Therefore it is natural to look for an estimation that will use other features of a polynomial. In a recent paper Za\" imi \cite{zaimi} gave a bound of $\deg_{+}(p)$ (based on estimation of Dubickas \cite{dubickas1,dubickas2}), for $p$ being irreducible over $\Q[x]$, using the discriminant and the Mahler measure:
\begin{theorem}{\cite[1.1]{zaimi}}\label{B3}
Let $p=\prod_{i=1}^{d}(x-\alpha_{i})$ be an irreducible polynomial in $\Q[x]$ with $0\neq\alpha_{i}\in\C\setminus\R^{+}$ for every $i=1,\dots,d$.
Then $$\deg_{+}(p)< \frac{2d\pi}{\arcsin\Big(\frac{1}{M^{d-1}}\sqrt{\frac{|\Delta|}{d^{d+3}}}\Big)}$$
where $\Delta=\prod_{1\leq i<j\leq d}(\alpha_{i}-\alpha_{j})^2$ is the discriminant of $p$ and $M=\prod_{i=1}^{d}\max\{1,|\alpha_{i}|\}$ is the Mahler measure of $p$.
\end{theorem}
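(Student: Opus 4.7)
The plan is to combine Theorem \ref{mahler} with a lower estimate on the least positive argument $\varphi_{0}$ among the roots of $p$, obtained via a Mahler--Dubickas root separation inequality. Since $p$ is irreducible over $\Q[x]$ and has no positive real root, either $p$ is linear (the inequality then follows by direct inspection), or all real roots of $p$ are negative (in which case $p\in\R^{+}[x]$ and the bound is trivial), or $p$ admits a pair of non-real complex roots. In the remaining case one chooses a root $\alpha$ realising the smallest positive argument $\varphi_{0}\in(0,\pi)$; irreducibility forces $\overline{\alpha}$ to also be a root of $p$, and consequently
$$|\alpha-\overline{\alpha}|=2|\mathrm{Im}\,\alpha|=2|\alpha|\sin\varphi_{0}.$$

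Next I would bound $|\alpha-\overline{\alpha}|$ from below by means of the discriminant. Writing $|\Delta|=\prod_{i<j}|\alpha_{i}-\alpha_{j}|^{2}$, one isolates the single factor corresponding to the pair $(\alpha,\overline{\alpha})$ and estimates each remaining factor by an inequality of the form $|\alpha_{i}-\alpha_{j}|\leq 2\max\{|\alpha_{i}|,1\}\max\{|\alpha_{j}|,1\}$. Since each index $k$ appears in $d-1$ such pairs and $M=\prod_{k}\max\{|\alpha_{k}|,1\}$, all these contributions collect into a power of $M$ and a combinatorial constant depending only on $d$. Dividing back into $|\Delta|$ and using $|\alpha|\leq M$ in the denominator $2|\alpha|$ then yields an estimate of the shape
$$\sin\varphi_{0}=\frac{|\alpha-\overline{\alpha}|}{2|\alpha|}\geq\frac{1}{M^{d-1}}\sqrt{\frac{|\Delta|}{d^{d+3}}}\,=:\beta.$$

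Finally, applying Theorem \ref{mahler} to $p$ with this bound $\varphi_{0}\geq\arcsin\beta$ gives
$$\deg_{+}(p)\leq\Big\lfloor\tfrac{d}{2}\Big\rfloor\Big(\Big\lceil\tfrac{\pi}{\varphi_{0}}\Big\rceil-2\Big)+d<\frac{d\pi}{2\varphi_{0}}+\frac{d}{2}\leq\frac{d\pi}{2\arcsin\beta}+\frac{d}{2}<\frac{2d\pi}{\arcsin\beta},$$
the last inequality being immediate since $\arcsin\beta\leq\pi/2$. The main obstacle is the middle step: the crude factor-by-factor bound $|\alpha_{i}-\alpha_{j}|\leq 2\max\{|\alpha_{i}|,1\}\max\{|\alpha_{j}|,1\}$ accumulates a combinatorial constant that behaves like $2^{\binom{d}{2}}$ rather than the polynomial-in-$d$ factor $d^{(d+3)/2}$ one needs to reach the sharp constant $d^{d+3}$. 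Extracting the correct dependence on $d$ therefore requires either invoking Dubickas's refined separation estimates from \cite{dubickas1,dubickas2} directly, or replacing the pointwise estimate by a Hadamard-type determinantal bound on the Vandermonde product $\prod_{i<j}(\alpha_{i}-\alpha_{j})$.
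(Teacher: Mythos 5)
Your route is essentially the one the paper itself uses in the surrounding text, with the caveat that the paper never proves Theorem \ref{B3}: it is quoted from \cite{zaimi}, and what the paper actually derives is the sharper Corollary \ref{estim}, obtained precisely as you propose --- the separation bound $\sin\varphi_{0}\geq\frac{1}{M^{d-1}}\sqrt{|\Delta|/d^{d+3}}=\beta$ (Lemma \ref{dubickas}, imported from \cite{dubickas1}, together with the easy observation $T\leq M^{d-1}$) fed into Theorem \ref{mahler}. Your closing chain $\lfloor d/2\rfloor\left(\lceil\pi/\varphi_{0}\rceil-2\right)+d<\frac{d\pi}{2\varphi_{0}}+\frac{d}{2}\leq\frac{d\pi}{2\arcsin\beta}+\frac{d}{2}<\frac{2d\pi}{\arcsin\beta}$ is exactly the relaxation showing that the Corollary implies the quoted theorem, and it checks out (note $\lceil\pi/\varphi_{0}\rceil-2\geq 0$ because $\varphi_{0}<\pi$ once a non-real root exists, so replacing $\lfloor d/2\rfloor$ by $d/2$ is legitimate). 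The gap you flag is genuine but not one the paper closes either: the pointwise estimate $|\alpha_{i}-\alpha_{j}|\leq 2\max\{1,|\alpha_{i}|\}\max\{1,|\alpha_{j}|\}$ indeed only produces a constant of order $2^{d(d-1)/2}$, and the $d^{d+3}$ constant is simply cited from Dubickas. Invoking \cite{dubickas1} there is the intended, legitimate move, and matches how the paper proceeds.

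Two small repairs are needed. A polynomial can have non-real roots and still lie in $\R^{+}[x]$ (e.g.\ $x^{2}+x+1$), and Theorem \ref{mahler} requires $p\notin\R^{+}[x]$; in that case, however, $\deg_{+}(p)=d$ and the bound is trivial, since $\arcsin\beta\leq\pi/2$ makes the right-hand side at least $4d$ (the same remark handles your all-real-roots case, where one should also observe $\beta\leq 1$, e.g.\ via Mahler's inequality $|\Delta|\leq d^{d}M^{2d-2}$, so that the arcsine is defined). Also, it is the realness of the coefficients, not irreducibility, that forces $\overline{\alpha}$ to be a root, and when $\varphi_{0}>\pi/2$ the step $\varphi_{0}\geq\arcsin\beta$ holds simply because $\arcsin\beta\leq\pi/2$ --- worth a line. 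With Lemma \ref{dubickas} granted and these touch-ups, your argument is complete and in fact yields the stronger bound of Corollary \ref{estim}.
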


Following the idea of \cite{zaimi}, we improve this result to get approximately 4 times better bound. The next lemma is a corollary of a theorem of Dubickas \cite{dubickas1}.

\begin{lemma}\cite{dubickas1}\label{dubickas}
Let $p=\prod\limits_{i=1}^{d}(x-\alpha_{i})\in\R[x]$ be a polynomial with $\alpha_{1},\dots,\alpha_{d}\in\C$ such that $|\alpha_{1}|\geq|\alpha_{2}|\geq\cdots\geq|\alpha_{d}|$. Let $\omega\in\C\setminus\R^{+}$ be a root of $p$. Then
$$\frac{|\omega-\overline{\omega}|}{2|\omega    |}\geq\frac{1}{T}\sqrt{\frac{|\Delta|}{d^{d+3}}}$$
where $\Delta$ is the discriminant of $p$ and $T=|\alpha_{1}|^{d-1}|\alpha_{2}|^{d-2}\dots|\alpha_{d-1}|$.
\end{lemma}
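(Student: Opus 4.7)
The plan is to derive Lemma \ref{dubickas} as an immediate corollary of the main theorem of \cite{dubickas1}, which provides a lower bound on the distance between two distinct roots of a polynomial in terms of its discriminant, its degree, and a product of the moduli of the remaining roots. To apply it here I take the distinguished pair of roots to be $\omega$ and $\overline{\omega}$, note that $|\overline{\omega}|=|\omega|$ simplifies the combinatorial factor, and finally divide both sides of Dubickas's bound by $2|\omega|$ to recover our formulation.

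There are two bookkeeping tasks. First, I would verify that Dubickas's combinatorial product matches exactly $T=|\alpha_1|^{d-1}|\alpha_2|^{d-2}\cdots|\alpha_{d-1}|$ after absorbing the factor $2|\omega|$ produced by the division; this uses the sorted ordering $|\alpha_1|\ge\cdots\ge|\alpha_d|$, under which the product $\prod_{i=1}^{d-1}|\alpha_i|^{d-i}$ is the one that naturally arises from the Hadamard estimate in Dubickas's proof (as opposed to the cruder Mahler-type product $\prod\max(1,|\alpha_i|)^{d-1}$). Second, I would confirm the constant $d^{(d+3)/2}$: this is Dubickas's constant, combining $(d-1)/2$ copies of $\sqrt d$ from the standard rows of the Vandermonde matrix $V=(\alpha_i^{j-1})$ with an extra $d^{3/2}$ from the row into which the factor $(\overline{\omega}-\omega)$ is collected after the row operation $R_l\leftarrow R_l-R_k$ (where $\omega=\alpha_k$, $\overline{\omega}=\alpha_l$), which replaces that row by complete homogeneous symmetric polynomials in $\omega$ and $\overline{\omega}$.

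The main obstacle I anticipate is matching the combinatorial factor cleanly in the regime where some $|\alpha_i|$ are smaller than one: the naive bound $\|V_i\|\le\sqrt d$ would yield the cruder Mahler-type product, whereas the refinement to $|\alpha_i|^{d-i}$ requires exploiting the sorted ordering at the right moment. Since this is precisely Dubickas's improvement and it is carried out in \cite{dubickas1}, I would cite his result rather than reprove it. Once the combinatorial identification is made, the lemma follows by straightforward substitution of $(\omega,\overline{\omega})$ into Dubickas's inequality and division by $2|\omega|$.
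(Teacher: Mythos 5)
Your proposal matches the paper's treatment: the paper offers no independent proof of Lemma \ref{dubickas}, presenting it simply as a corollary of the root-separation theorem of \cite{dubickas1}, exactly as you do by specializing Dubickas's bound to the conjugate pair $(\omega,\overline{\omega})$ and dividing by $2|\omega|$. The bookkeeping you flag (identifying the product $T=|\alpha_{1}|^{d-1}\cdots|\alpha_{d-1}|$ and the constant $d^{(d+3)/2}$) is precisely what both you and the paper delegate to Dubickas, so nothing further is required.
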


If $\varphi_{0}$ is the least positive argument of all the roots of $p$ from \ref{dubickas} and $\varphi_{0}\in(0,\pi/2)$, then $T\leq(\prod_{i=1}^{d}\max\{1,|\alpha_{i}|\})^{d-1}=M^{d-1}$ and we have $\sin \varphi_{0}\geq\frac{1}{M^{d-1}}\sqrt{\frac{|\Delta|}{d^{d+3}}}$ by \ref{dubickas}. The next corollary now follows by (\ref{mahler}).

\begin{corollary}\label{estim}
Let $p=\sum_{i=0}^{d}a_{i}x^{i}\in\mathcal{D}^{+}\setminus\R^{+}[x]$ be a monic squarefree polynomial, $d=\deg(p)$, $p(0)\neq 0$, $\Delta$ be the discriminant and $M$ be the Mahler measure of $p$. Moreover, let $L$ be an upper bound for the magnitude of any root of $p$.  Then
$$\deg_{+}(p)\leq\Big\lfloor \frac{d}{2}\Big\rfloor\bigg(\bigg\lceil\frac{\pi}{\arcsin\Big(\frac{1}{K^{d-1}}\sqrt{\frac{|\Delta|}{d^{d+3}}}\Big)}\bigg\rceil-2\bigg)+d$$
for $K\in\{M, L^{d/2}\}$.
\end{corollary}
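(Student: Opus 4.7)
The plan is to derive the corollary as a direct consequence of Theorem \ref{mahler} by using Lemma \ref{dubickas} to supply an explicit lower bound on the smallest positive argument $\varphi_0$ of the roots of $p$. Since $p$ is squarefree, $p(0)\neq 0$, and $p\notin\R^+[x]$ (if all roots were negative real, $p=\prod(x+c_i)$ with $c_i>0$ would lie in $\R^+[x]$), there is a non-real root $\omega$ realizing the minimum $\varphi_0\in(0,\pi)$.

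The computational core is the estimate $T\leq K^{d-1}$, where $T=|\alpha_1|^{d-1}|\alpha_2|^{d-2}\cdots|\alpha_{d-1}|$ appears in Lemma \ref{dubickas} under the ordering $|\alpha_1|\geq\cdots\geq|\alpha_d|$. For $K=L^{d/2}$ this is immediate: the exponents of $T$ sum to $d(d-1)/2$, so $T\leq L^{d(d-1)/2}=K^{d-1}$. For $K=M$, the inequality $|\alpha_i|^{d-i}\leq \max\{1,|\alpha_i|\}^{d-1}$ (checked for $i=1,\dots,d-1$ by splitting on whether $|\alpha_i|\geq 1$) and then extending the product to all $d$ indices yields $T\leq M^{d-1}$. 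Using $|\omega-\overline{\omega}|/(2|\omega|)=\sin\varphi_0$, Lemma \ref{dubickas} delivers $\sin\varphi_0\geq c$ for $c=K^{-(d-1)}\sqrt{|\Delta|/d^{d+3}}$; in particular $c\leq 1$, so $\arcsin c$ is well-defined.

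To conclude I would case-split on $\varphi_0$. If $\varphi_0\leq\pi/2$, monotonicity of $\sin$ on $[0,\pi/2]$ gives $\varphi_0\geq\arcsin c$, hence $\lceil\pi/\varphi_0\rceil\leq\lceil\pi/\arcsin c\rceil$, and Theorem \ref{mahler} delivers the claim. If instead $\varphi_0>\pi/2$, then $\lceil\pi/\varphi_0\rceil\leq 2$ and Theorem \ref{mahler} yields the stronger bound $\deg_+(p)\leq d$; the right-hand side of the corollary still dominates $d$, since $c\leq 1$ forces $\lceil\pi/\arcsin c\rceil\geq 2$.

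No step presents a genuine obstacle; all of the analytic content is already encoded in Lemma \ref{dubickas} (which in turn rests on Dubickas's estimate) and Theorem \ref{mahler}, and the only bookkeeping is the pair of monotonicity arguments for $T\leq K^{d-1}$ together with the trivial case $\varphi_0>\pi/2$.
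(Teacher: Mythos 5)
Your proposal is correct and follows essentially the same route as the paper: apply Lemma \ref{dubickas} with $T\leq K^{d-1}$ to get $\sin\varphi_{0}\geq\frac{1}{K^{d-1}}\sqrt{|\Delta|/d^{d+3}}$, then plug the resulting lower bound on $\varphi_{0}$ into Theorem \ref{mahler}. You merely spell out details the paper leaves implicit (the $K=L^{d/2}$ exponent count and the trivial case $\varphi_{0}\geq\pi/2$), which is fine.
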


As a bound in \ref{estim} may serve for instance $L=1+\max\{|a_{n-1}|,\dots,|a_{0}|\}$ or $L=\sqrt{1+\sum_{i=0}^{d-1}a_{i}^{2}}$.

Of course, the previous assertion depends heavily on the stage of knowledge about estimation of the distance of roots of a polynomial. Moreover, it may happen that the bound in \ref{dubickas} will be depressed by a small angle between other roots that do not influence the value of $\deg_{+}(p)$. However, so far \ref{estim} is the best known criterion.


\section{An unusual property}\label{sec3}

In the end let us introduce a conjecture that was inspired by an approach using formal power series. For a formal power serie $P=\sum_{i=0}^{\infty}a_{i}x^{i}\in\R[[x]]$ and $n\in\N_{0}$ denote $(P)_{n}=\sum^{n}_{i=0}a_{i}x^{i}\in\R[x]$.  As a consequence of a proof in \cite{turan} we get that for an irreducible $p\in\R[x]$ of degree 2 is $p\cdot (\frac{1}{p})_{n}\in\R^{+}[x]$ for $n=\deg_{+}(p)-2=D(p)$.

On the other hand, for $q=x^{2}-x+\frac{1}{2}$ is $\deg_{+}(q^{2})=8$, $D(q^{2})=4$ by \ref{powerpol} and $q^{2}\cdot(\frac{1}{q^{2}})_{4}\notin\R^{+}[x]$ since $\frac{1}{q^{2}}=4+16x+32x^2+32x^3-16x^4+\cdots$. However, we still have $q^{2}\cdot(\frac{1}{q})_{2}\cdot(\frac{1}{q})_{2}\in\R^{+}[x]$,  since $q\cdot(\frac{1}{q})_{2}\in\R^{+}[x]$. With support of similar computational experiments we formulate a conjecture.
We say that a polynomial $p\in\mathcal{D}^{+}(\R)$ such that $p(0)\neq 0$ \emph{has a property $(\ast)$} if there are $p_{1},\dots,p_{k}\in\R[x]$ such that $p=p_{1}\cdots p_{k}$, $p_{i}\cdot (\frac{1}{p_{i}})_{D(p_{i})}\in\R^{+}[x]$ for every $i=1,\dots,k$ and $D(p)=\sum_{i=1}^{k}D(p_{i})$.

\

\textbf{Conjecture:} Let $p\in\mathcal{D}^{+}(\R)$ be a quadratic polynomial and $p(0)\neq 0$. Then $p^{k}$ has the property $(\ast)$ for every $k\in\N$.

\begin{example}
This example shows a cubic polynomial $p=(x+\frac{1}{2})(x^2-\frac{3}{2}x+1)$ does not have the property $(\ast)$. Assume the contrary. By \ref{B4}, is $\deg_{+}(q)=5$ for $q=x^2-\frac{3}{2}x+1$ and thus we have $\deg_{+}(p)=5$, since $(x^{2}+x+\frac{3}{4})p=x^{5}+\frac{11}{16}x+\frac{3}{8}\in\R^{+}[x]$.  Hence $D(p)=2\neq 0+3=D(x+\frac{1}{2})+D(q)$. Therefore must be $p\cdot(\frac{1}{p})_{D(p)}\in\R^{+}[x]$ by assumption. But $\frac{1}{p}=2-x+\frac{9}{2}x^2-\frac{33}{4}x^3+\cdots$ and $p\cdot(\frac{1}{p})_{2}=\frac{9}{2}x^5-\frac{11}{2}x^4+\frac{33}{8}x^3+1\notin\R^{+}[x]$, a contradiction.

\end{example}




\begin{thebibliography}{9}

\bibitem{briggs}
Briggs, W.E.: \emph{Zeros and factors of polynomials with positive coefficients and protein-ligand binding}. Rocky Mountain J. Math. \textbf{15(1)}, 75--90 (1985)

\bibitem{dancs}
Dancs, I.: \emph{Remarks on a paper of P. Tur\'an}. Ann. Univ. Sci. Budapest. E\"otv\"os Sect. Math. \textbf{7}, 133--141 (1964)

\bibitem{dubickas1}
Dubickas, A.: \emph{An estimation of the difference between two zeros of a polynomial}. In: Schweiger F., Manstavi\v cius E. (eds.) New Trends in Probability and Statistics. Vol. 2. Analytic and Probabilistic Methods in Number Theory, pp. 17--21. TEV/VSP, Vilnius, Utrecht (1992)

\bibitem{dubickas2}
Dubickas, A.: \emph{On roots of polynomials with positive coefficients}. Manuscripta Math. \textbf{123(3)}, 353--356  (2007)

\bibitem{eaton}
Eaton, J.E.: \emph{Minimal positive polynomials}. IEEE Trans. Microw. Theory Tech., \textbf{8(2)}, 171  (1960)

\bibitem{geman}
Geman, S., Johnson M.: \emph{Probabilistic grammars and their applications}. In: Smelser N.J., Baltes P.B. (eds.) International Encyclopedia of the Social \& Behavioral Sciences, pp. 12075--12082. Pergamon, Oxford (2002)

\bibitem{harnos}
Harnos, Z.: \emph{Divisors of polynomials with positive coefficients}. Period. Math. Hungar. \textbf{11(2)}, 117--130  (1980)


\bibitem{kuba}
Kuba, G.: \emph{Several types of algebraic numbers on the unit circle}. Arch. Math. \textbf{85}, 70--78  (2005)

\bibitem{manning}
Manning, C.,  Sch\"utze, H.: \emph{Foundations of Statistical Natural Language Processing}. MIT Press, Cambridge (1999)

\bibitem{mignotte}
Mignotte, M.: \emph{On the distance between the roots of a polynomial}. Appl. Algebra Eng. Comm. Comp. \textbf{6}, 327--332  (1995)

\bibitem{meissner}
Meissner, E.: \emph{\"Uber positive Darstellung von Polynomen}. Math. Ann. \textbf{70}, 223--225  (1911)

\bibitem{strauss}
Motzkin, T.S., Straus, E.G.: \emph{Divisors of polynomials and power series with positive coefficients}. Pacific J. Math. \textbf{29}, 641--652  (1969)

\bibitem{nussbaum}
Nussbaum, R.D.,  Walsh, B.:  \emph{Approximation by polynomials with nonnegative coefficients and the spectral theory of positive operators}. Trans. Amer. Math. Soc. \textbf{350}, 2367--2391  (1998)

\bibitem{poincare}
Poincar\'e, H.:  \emph{Sur les \'equations alg\'ebriques}. C.R. Acad. Sci. (Paris) \textbf{97}, 1418--1419  (1883)

\bibitem{polya}
P\'olya, G.: \emph{\"Uber positive Darstellung von Polynomen}. Vierteljschr. Naturforsch. Ges. Z\"urich \textbf{73} 141--145,  (1928);  In: Boas R.P. (eds.) Collected Papers. Vol. 2, pp. 309--313. MIT Press, Cambridge  (1974)

\bibitem{riblet}
Riblet, H.J.:  \emph{A general theorem on an optimum stepped impedance transformer}. IEEE Trans. Microw. Theory Tech. \textbf{8(2)}, 169--170  (1960)

\bibitem{lucas}
Lucas, S.: \emph{Polynomials over the reals in proofs of termination: from theory to practice}. RAIRO Theor. Inform. Appl. \textbf{39}, 547--586  (2005)

\bibitem{steinberger}
Steinberger, J.P.: \emph{The lowest-degree polynomial with nonnegative coefficients divisible by the $n$-th cyclotomic polynomial}. http://itcs.tsinghua.edu.cn/$\sim$john/gaps.ps (2012). Accessed 5 June 2012

\bibitem{turan}
Tur\'an, P.: \emph{On an improvement of some new one-sided theorems of the theory of diophantine approximations}. Acta Math. Acad. Sci. Hungar. \textbf{11}, 299--316  (1960)

\bibitem{woestijne}
van de Woestijne, C.E.:  \emph{Factors of disconnected graphs and polynomials with nonnegative integer coefficients}. Ars Math. Contemp. \textbf{5}, 307--323  (2012)

\bibitem{xia}
Xia, B.,  Yang L.: \emph{A new result on the p-irreducibility of binding polynomials}. Comput. Math. Appl. \textbf{48(12)}, 1811--1817  (2004)

\bibitem{yu}
Yu, D.S.,  Zhou, S.P.:  \emph{Approximation by rational functions with polynomials of positive coefficients as the denominators}. Anal. Theory Appl. \textbf{22(1)}, 65--71  (2006)

\bibitem{zaimi}
 Za\"imi, T.: \emph{On roots of polynomials with positive coefficients}. Publications de L'Institut Math\'ematique \textbf{89(103)}, 89--93  (2011)

\bibitem{zantema}
Zantema, H.: \emph{Termination}. In: Bezem, M.,  Klop J.W., de Vrijer, R. (eds.)  TeReSe. Term Rewriting Systems. Chapter 6, pp. 181--259. Cambridge University Press (2003)
\end{thebibliography}
\end{document}